\newtheorem{thm}{Theorem}[section]
\newtheorem{lem}[thm]{Lemma}
\newtheorem{prop}[thm]{Proposition}
\theoremstyle{definition}
\theoremstyle{remark}
\numberwithin{equation}{section}
\theoremstyle{remark}
\newcommand{\mbb}{\mathbb}
\newcommand{\ra}{\rightarrow}
\newcommand{\pa}{\partial}
\newcommand{\sm}{\setminus}
\newcommand{\no}{\noindent}
\newcommand{\cal}{\mathcal}
\newcommand{\abs}[1]{\left\vert#1\right\vert}
\begin{document}
\title{A comparison of two biholomorphic invariants}
%\keywords{}
%\thanks{The author was supported in part by a UGC--CAS Grant}
%\subjclass{Primary: 32F45  ; Secondary : 32Q45}
\author{Prachi Mahajan, Kaushal Verma}

\address{PM: Department of Mathematics,
Indian Institute of Technology Bombay, Powai,
Mumbai 400076, India}
\email{prachi@math.iitb.ac.in}

\address{KV: Department of Mathematics, Indian Institute of Science, Bangalore 560 012, India}
\email{kverma@iisc.ac.in}

\begin{abstract}
The Fridman invariant, which is a biholomorphic invariant on Kobayashi hyperbolic manifolds, can be seen as the dual of the much studied squeezing function. We compare this pair of invariants by showing that they are both equally capable of determining the boundary geometry of a bounded domain if their boundary behaviour is apriori known. 
\end{abstract}

\maketitle

%%%%%%%%%%%%%%
\section{Introduction}

\no Recall that the squeezing function associated to a bounded domain is a measure of the largest euclidean ball 
contained in all possible holomorphic embeddings of the given domain into the unit ball in $\mbb C^n$. More precisely, for
a bounded domain $D \subset \mbb C^n$ and $p \in D$, let $\cal F$ be the family of all injective holomorphic maps $ f $ 
from $D$ to the unit ball $\mbb B^n \subset \mbb C^n$ that map $p$ to the origin. Let $S_D(p, f)$ be the supremum of those $r > 0$ for
which the image $f(D)$ contains $B^n(0, r)$, the euclidean ball of radius $r$ around the origin in $ \mathbb{C}^n $. The squeezing function $s_D : D \mapsto (0, 1]$
is defined as 
\[
s_D(p) = \sup \{S_D(p, f) : f \in \cal F \}.
\]
That this is a biholomorphic invariant follows from its definition and when $D = \mbb B^n$, it can be checked that $s_D \equiv 1$. Various aspects of $s_D$ have been studied of late but among those that are directly relevant to this note are  its boundary behaviour on some classes of domains (for example \cite{FW}, \cite{KZ}, \cite{N} and \cite{NA}) and conversely, its efficacy in determining some geometric properties of the boundary of the domain if its boundary behaviour is a priori known -- for example, \cite{SK} and \cite{Z2}.

\medskip

It is interesting to note that another biholomorphic invariant, that is dual to the squeezing function in much the same way as the Carath\'{e}odory and Kobayashi metrics are, was defined by Fridman in \cite{Fr1}, \cite{Fr2}. Let us recall its construction: for $X$ a Kobayashi hyperbolic complex manifold of dimension $n$, let $B_X(p, r)$ be the Kobayashi ball around $p \in X$ of radius $r > 0$. Let $\cal R$ be the set of all $r > 0$ such that there is an injective holomorphic map $f : \mbb B^n \ra X$ with $B_X(p, r) \subset f(\mbb B^n)$. Note that 
$\cal R$ is non-empty. Indeed, the hyperbolicity of $X$ implies that the intrinsic topology on it is equivalent to that induced by the Kobayashi metric. Hence, for small $r >0$, the ball $B_X(p, r)$ is contained in a coordinate chart and this shows that there is an injective holomorphic map from the ball into $X$ whose image contains $B_X(p, r)$. The Fridman invariant is
\[
h_X(p) = \inf_{r \in \cal R} \frac{1}{r}
\]
which is a non-negative real-valued function on $X$. This is a biholomorphic invariant since the Kobayashi metric is itself preserved 
by such maps and among other things, Fridman showed that (i) $h_X$ is continuous and that (ii) if $h_X(p) = 0$ for some $p \in X$, then
$X$ is biholomorphic to $\mbb B^n$ in \cite{Fr1}. Other aspects of this invariant such as its boundary behaviour were studied in \cite{MV}.

\medskip

The purpose of this note is (i) to show that much like the squeezing function,  the Fridman invariant can also determine 
the nature of the boundary of a given domain if its boundary behaviour is a priori known and (ii) to localize and provide a
different proof of some of the results in \cite{N} and \cite{SK}. While both (i) and (ii) use the methods of scaling, they 
rely on an observation made in \cite{MV} namely, the convergence of the integrated Kobayashi distance on each scaled domain to that in the limiting domain.
More specifically, refer to Lemma 5.2 and 5.7 of \cite{MV}.

\begin{thm}
Let $D \subset \mbb C^n$ be a bounded convex domain with $C^{\infty}$-smooth boundary. Then $\partial D$ is strongly pseudoconvex if $h_D(z) \ra 0$ as $z \ra \pa D$.
\end{thm}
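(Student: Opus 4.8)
The plan is to argue by contradiction, combining the scaling method for convex domains with the convergence of Kobayashi distances on the scaled domains recorded in \cite{MV}. Suppose $\partial D$ is not strongly pseudoconvex. Since $D$ is convex with $C^\infty$ boundary, there is then a point $p \in \partial D$ at which the Levi form of $\partial D$ degenerates. Fix a sequence $z_j \in D$ with $z_j \ra p$, say along the inner normal at $p$. By hypothesis $h_D(z_j) \ra 0$, so unravelling the definition of $h_D$ (and using $\cal R \neq \emptyset$) we obtain numbers $r_j \ra \infty$ together with injective holomorphic maps $f_j : \mbb B^n \ra D$ satisfying $B_D(z_j, r_j) \subset f_j(\mbb B^n)$; composing each $f_j$ with an automorphism of $\mbb B^n$ we may also assume $f_j(0) = z_j$.

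Next I would scale $D$ at $z_j$. The affine scaling of convex domains (Pinchuk's construction, in the anisotropic form adapted to the type of $p$ when $p$ is of finite type) yields affine automorphisms $\La_j$ of $\mbb C^n$ such that $\La_j(D)$ converges, uniformly on compact subsets, to a convex domain $D_\infty$ that contains no complex affine line -- hence $D_\infty$ is complete Kobayashi hyperbolic -- and such that $\La_j(z_j) \ra q_\infty$ for some $q_\infty \in D_\infty$. Since the Levi form of $\partial D$ degenerates at $p$, the distinguished boundary point of $D_\infty$ is Levi-degenerate as well, and therefore $D_\infty$ is \emph{not} biholomorphic to $\mbb B^n$: when $p$ is of finite type $2m \geq 4$ one obtains the explicit convex polynomial model $D_\infty = \{\mathrm{Re}\,w_n + P(w') < 0\}$ of degree $2m$, which is well known not to be biholomorphic to the ball.

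Then I would transport the maps through the scaling. The maps $\ti f_j := \La_j \circ f_j : \mbb B^n \ra \La_j(D)$ are injective and holomorphic, $\ti f_j(0) = \La_j(z_j) \ra q_\infty$, and, $\La_j$ being biholomorphic,
\[
B_{\La_j(D)}\big(\La_j(z_j), r_j\big) \;=\; \La_j\big(B_D(z_j, r_j)\big) \;\subset\; \ti f_j(\mbb B^n).
\]
By Lemmas 5.2 and 5.7 of \cite{MV} the Kobayashi distances $k_{\La_j(D)}$ converge, locally uniformly, to $k_{D_\infty}$; combined with the distance-decreasing property of the $\ti f_j$ and the completeness of $D_\infty$, this shows $\{\ti f_j\}$ is a normal family on $\mbb B^n$, and after passing to a subsequence $\ti f_j \ra g_\infty : \mbb B^n \ra D_\infty$ with $g_\infty(0) = q_\infty$. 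The same convergence gives, for every compact $K \subset D_\infty$, that $\sup_{w \in K} k_{\La_j(D)}(\La_j(z_j), w) < r_j$ once $j$ is large, so $K \subset B_{\La_j(D)}(\La_j(z_j), r_j) \subset \ti f_j(\mbb B^n)$; thus the images $\ti f_j(\mbb B^n)$ eventually exhaust $D_\infty$. With these two facts I would run the argument from Fridman's proof that $h_X(p) = 0$ forces $X \cong \mbb B^n$ -- normal families for the $\ti f_j$ and for their inverses on compact subsets of $D_\infty$, the Hurwitz-type theorem for injective holomorphic mappings to exclude degeneracy of $g_\infty$, and the maximum principle -- to conclude that $g_\infty$ is a biholomorphism of $\mbb B^n$ onto $D_\infty$. (Equivalently, these facts yield $h_{D_\infty}(q_\infty) \leq \lim_j h_{\La_j(D)}(\La_j(z_j)) = \lim_j h_D(z_j) = 0$, and Fridman's statement (ii) applies.) In either case $D_\infty \cong \mbb B^n$, contradicting the previous paragraph, and the theorem follows.

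I expect the main obstacle to be the scaling step when every Levi-degenerate point of $\partial D$ is of infinite type: there the naive affine scaling may degenerate -- the limit can be a half-space or contain complex lines -- and one must use a finer scaling (or a limiting/exhaustion argument) to still produce a limit domain $D_\infty$ that is complete Kobayashi hyperbolic and provably not biholomorphic to $\mbb B^n$. The other delicate point is the passage from the exhaustion of $D_\infty$ by the images $\ti f_j(\mbb B^n)$ to the surjectivity of $g_\infty$, which is precisely where the convergence of the integrated Kobayashi distances from \cite{MV} is indispensable -- without it the large Kobayashi balls $B_{\La_j(D)}(\La_j(z_j), r_j)$ would carry no information about $D_\infty$. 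The remaining steps -- the normal-family estimates, the Hurwitz-type argument, and the maximum-principle step -- are routine.
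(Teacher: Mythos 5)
Your finite-type argument runs essentially parallel to the paper's: scale anisotropically at $p$, transport the embeddings $f_j$ through the scaling, and use the convergence of the integrated Kobayashi distances from \cite{MV} together with completeness of the limit domain to show that the images $\ti f_j(\mbb B^n)$ exhaust $D_\infty$ and that the limit map is a biholomorphism of $\mbb B^n$ onto $D_\infty$. One substantive difference: where you dismiss the degenerate polynomial model as ``well known not to be biholomorphic to the ball,'' the paper does this step explicitly and in the positive direction --- it realizes $\mbb B^n$ as the Siegel domain $\Sigma$, invokes Coupet--Pinchuk (Theorem 2.1 of \cite{CP}) to extend the biholomorphism $D_\infty \ra \Sigma$ holomorphically, and then biholomorphically, across a point of the line $\{('0,\iota a): a \in \mathbb{R}\} \subset \pa D_\infty$, and uses invariance of the Levi form to conclude that the model itself is strongly pseudoconvex there, forcing all multitype weights to equal $2$. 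If you keep the contradiction form, you need a precise reference for the non-equivalence of the degenerate model with the ball; for these unbounded models it is exactly as deep as the extension argument the paper carries out.

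The genuine gap is the one you flag yourself: the infinite-type case. Your scaling produces a usable limit only when $p$ has finite type, and you leave open what to do otherwise, which means your argument as written only shows that every \emph{finite-type} boundary point is strongly pseudoconvex --- not yet the theorem. The paper closes this with Zimmer's machinery: if $p^0$ were a $C^\infty$-convex point of infinite type, then by \cite{Z2} there are affine isomorphisms $A^j$ and points $p^j \ra p^0$ with $A^j(D) \ra D_\infty$ a convex domain containing no complex affine lines (hence complete hyperbolic, via Proposition 6.1 of \cite{Z1}) but whose \emph{boundary} contains a nontrivial complex affine disc; by Theorem 3.1 of \cite{Z1} such a $D_\infty$ is not Gromov hyperbolic with respect to $d_{D_\infty}$, whereas $(\mbb B^n, d_{\mbb B^n})$ is, so $D_\infty \not\cong \mbb B^n$. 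Running your ball-exhaustion argument on that scaling sequence (normality here comes from Proposition 4.2 of \cite{Z1} rather than tautness of a containing model) produces the contradiction and rules out infinite type altogether, reducing the theorem to the finite-type case you did handle.
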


\noindent The corresponding statement for the squeezing function $s_D(z)$ is already known -- see \cite{Z2} for instance. Note that the boundary $\pa D$ can apriori be of infinite type near $p^0$ in this theorem.

\medskip

A similar result holds for $h$-extendible boundary points.
Before proceeding further, recall that $ p^0 \in \partial D $ is said to be 
an $h$-extendible boundary point if $ \partial D $ is smooth pseudoconvex finite type near $ p^0 $ and the Catlin and the D'Angelo multitypes at $ p^0 $ coincide. The class of $h$-extendible points includes smooth pseudoconvex finite type boundary points in $ \mathbb{C}^2 $, convex finite type points in $ \mathbb{C}^n $ and pseudoconvex finite type boundary points in $ \mathbb{C}^n $ with Levi-form having at most one degenerate eigenvalue. 

\begin{thm}
Let $D \subset \mbb C^n$ be a bounded domain with $p^0 \in \partial D$. Assume that $\pa D$ is $C^{\infty}$-smooth and $h$-extendible near $p^0$. Then $\partial D$ is strongly pseudoconvex near $p^0$ if either $h_D(z) \ra 0$ or $s_D(z) \ra 1$ as $z \ra p^0$.
\end{thm}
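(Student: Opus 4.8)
The plan is to argue by contradiction, via the scaling method. Since strong pseudoconvexity of $\pa D$ is an open condition, it suffices to show that $p^0$ itself is a strongly pseudoconvex point; so suppose it is not. As $\pa D$ is $C^{\infty}$-smooth, pseudoconvex and of finite type near $p^0$ (these being part of the definition of $h$-extendibility), the failure of strong pseudoconvexity at $p^0$ forces the common Catlin--D'Angelo multitype at $p^0$ to be $(1, m_2, \dots, m_n)$ with $m_j > 2$ for at least one $j \in \{2, \dots, n\}$.

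Next I would run the polynomial scaling associated to an $h$-extendible boundary point (in the form used in \cite{MV}) along a sequence $z_k \to p^0$ converging along the inner normal. This yields polynomial automorphisms $\La_k$ of $\mbb C^n$ such that the domains $D_k := \La_k(D)$ converge, in the local Hausdorff sense, to the model domain
\[
D_\infty = \big\{ w \in \mbb C^n : \operatorname{Re} w_n + P(w_1, \dots, w_{n-1}) < 0 \big\},
\]
where $P$ is a real, plurisubharmonic, weighted-homogeneous polynomial (with respect to the multitype weights) carrying no pluriharmonic term; $h$-extendibility makes $P$ nondegenerate, so that $D_\infty$ is Kobayashi hyperbolic --- in fact complete hyperbolic and biholomorphic to a bounded domain. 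One also arranges $\La_k(z_k) \to q_0 \in D_\infty$, say $q_0 = (0, \dots, 0, -1)$.

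Now I would transport the boundary hypothesis to the model. By biholomorphic invariance, $h_D(z_k) = h_{D_k}(\La_k(z_k))$; and the convergence of the integrated Kobayashi distances of $D_k$ to that of $D_\infty$ --- Lemmas 5.2 and 5.7 of \cite{MV} --- gives $h_{D_k}(\La_k(z_k)) \to h_{D_\infty}(q_0)$. Since $z_k \to p^0$ and $h_D(z) \to 0$ as $z \to p^0$, we obtain $h_{D_\infty}(q_0) = 0$, whence Fridman's theorem (item (ii) of the Introduction, from \cite{Fr1}) forces $D_\infty$ to be biholomorphic to $\mbb B^n$. The squeezing case runs in parallel: $s_D(z_k) = s_{D_k}(\La_k(z_k))$, the squeezing function is stable under this convergence of domains (a normal families argument), so $s_{D_k}(\La_k(z_k)) \to s_{D_\infty}(q_0) = 1$; as $D_\infty$ is biholomorphic to a bounded domain, the known characterization of the ball by the squeezing function (cf. \cite{Z2}) again yields $D_\infty$ biholomorphic to $\mbb B^n$.

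The crux --- and the step I expect to be the main obstacle --- is then to rule out $D_\infty \cong \mbb B^n$ given that some $m_j > 2$; equivalently, to prove that an $h$-extendible model domain $D_\infty$ as above is biholomorphic to $\mbb B^n$ precisely when $P$ is, after a $\mbb C$-linear change of coordinates in $w_1, \dots, w_{n-1}$, equal to $|w_1|^2 + \dots + |w_{n-1}|^2$ (equivalently, when all $m_j = 2$, i.e. $p^0$ is strongly pseudoconvex). The route I would take exploits the one-parameter group of anisotropic dilations
\[
\phi_t(w) = \big( t^{1/m_2} w_1, \dots, t^{1/m_n} w_{n-1}, t\, w_n \big), \qquad t > 0,
\]
which are automorphisms of $D_\infty$ (by weighted homogeneity of $P$) with boundary fixed point $0 \in \pa D_\infty$: conjugating this flow by a putative biholomorphism $D_\infty \to \mbb B^n$ produces a one-parameter subgroup of $\mathrm{Aut}(\mbb B^n)$, and comparing its orbit geometry near the boundary against that of $\{\phi_t\}$ --- or, equivalently, comparing the anisotropic boundary blow-up rates of the (invariant) Kobayashi metric at $0$, which on $\mbb B^n$ are isotropic but on $D_\infty$ have order $1/m_j$ in the $w_j$-direction --- forces all $m_j = 2$. (Alternatively, one may invoke the classification of nondegenerate polynomial model domains that are biholomorphic to the ball.) Either way this contradicts our assumption, so $p^0$ is strongly pseudoconvex, and hence $\pa D$ is strongly pseudoconvex throughout a neighbourhood of $p^0$.
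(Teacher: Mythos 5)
Your overall architecture coincides with the paper's: scale along the inner normal to the $h$-extendible model $D_\infty = \{2\Re z_n + P < 0\}$, transport the hypothesis $h_D \to 0$ (resp.\ $s_D \to 1$) to the statement $h_{D_\infty}(q_0)=0$ (resp.\ $s_{D_\infty}(q_0)=1$) via the convergence of the Kobayashi data on the scaled domains, conclude that $D_\infty$ is biholomorphic to $\mathbb{B}^n$, and then argue that this forces all multitype entries to equal $2$. The paper carries out the transport step by hand --- it extracts the extremal embeddings $F^j$, scales them, and shows that the limit is a biholomorphism of $\mathbb{B}^n$ onto $D_\infty$ --- rather than quoting a stability theorem for $h$ and $s$ together with Fridman's (or the squeezing-function) characterization of the ball; in doing so it also supplies the localization that you elide (Yu's peak function at an $h$-extendible point and Proposition 3.4 of \cite{MV} for $h_D$; the attraction property, Lemma 2.1.1 of \cite{Gaussier-1999}, to control $(F^j)^{-1}$ in the squeezing case). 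That localization is genuinely needed, since $h_D$ and $s_D$ are global invariants of $D$ while the scaling only sees $U \cap D$; still, these are omissions of detail covered by the machinery you cite, not of substance.

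The genuine gap is in the step you yourself flag as the crux. Your primary proposal --- conjugate the anisotropic dilation flow $\phi_t$ into $\mathrm{Aut}(\mathbb{B}^n)$ and compare orbit geometry, or compare the anisotropic blow-up rates of the Kobayashi metric at $0 \in \partial D_\infty$ with the isotropic rates on $\partial \mathbb{B}^n$ --- does not close as stated. Boundary blow-up rates of $F_{D_\infty}$ are not intrinsic data that can be compared across a biholomorphism unless that biholomorphism is known to extend to the boundary with control of its differential, and that boundary extension is precisely the missing content. The intrinsic quantity that survives conjugation, the displacement $d_{D_\infty}(q_0, \phi_t(q_0))$, grows like $\frac{1}{2}\abs{\log t}$ (restrict to the slice $'z='0$ for the upper bound), which is exactly the behaviour of a hyperbolic one-parameter subgroup of $\mathrm{Aut}(\mathbb{B}^n)$; so the coarse orbit comparison yields no contradiction. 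What the paper actually does is your parenthetical fallback: it invokes Theorem 2.1 of \cite{CP} to extend a biholomorphism $\tilde{\theta} : D_\infty \to \Sigma$ holomorphically past the boundary --- indeed biholomorphically across some point of the line $\big\{ ('0, \iota a) : a \in \mathbb{R}\big\}$, after checking that the Jacobian of $\tilde{\theta}$ cannot vanish identically there --- and then uses invariance of the Levi form under local biholomorphisms to force $\partial D_\infty$ to be strongly pseudoconvex at that point, whence $m_2 = \cdots = m_n = 2$ and $P$ is a positive definite Hermitian form. To make your argument self-contained you must either supply this extension theorem for weighted homogeneous rigid domains or an actual classification of the nondegenerate polynomial models biholomorphic to $\mathbb{B}^n$; without one of these the contradiction is not reached.
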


\medskip

\noindent Here, it turns out that  $\partial D$ is strongly pseudoconvex near $p^0$ if 
either $h_D(p^j) \ra 0$ or $s_D(p^j) \ra 1$ for a sequence $ p^j $ in $ D $ converging to $ p^0 $ only 
along the inner normal to $ \partial D $ at $ p^0 $. This will be evident from the proof of this theorem.

\medskip

\no For a domain $ D $ in $ \mathbb{C}^n $, $ F_D $ denotes its Kobayashi-Royden infinitesimal metric and
$ d_D $ its integrated Kobayashi distance.

\medskip

\noindent Before proving these theorems, we begin with:

%%%%%%%%%%%%%
\section{Two Examples}

\begin{lem}
The Fridman invariant for the unit polydisc $ \Delta^n $ is given by
\[
h_{\Delta^n}(z) = 2 \left(\log \left( \frac{\sqrt{n} + 1}{\sqrt{n} - 1}    \right)  \right)^{-1}
\]
for every $z \in \Delta^n$.
\end{lem}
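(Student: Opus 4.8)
The plan is to use the homogeneity of $\Delta^n$ to reduce to one point and then bracket the admissible radii between two Schwarz-lemma estimates. Since $\Delta^n$ is homogeneous and the Fridman invariant is a biholomorphic invariant, $h_{\Delta^n}$ is constant, so it suffices to work at the origin. The product property of the Kobayashi distance identifies the Kobayashi balls as polydiscs: $B_{\Delta^n}(0,r)=(\tanh r)\Delta^n$. Writing $\cal R$ for the set of admissible radii in the definition of $h_{\Delta^n}(0)$, the goal is to show $\cal R=(0,r_0]$ with $r_0=\tanh^{-1}(1/\sqrt n)=\tfrac12\log\!\left(\tfrac{\sqrt n+1}{\sqrt n-1}\right)$; this yields $h_{\Delta^n}(0)=1/r_0$, the asserted value. (It is worth confirming along the way that $\cal R$ is bounded above, so that $\inf_{r\in\cal R}1/r=1/\sup\cal R$, and that the normalization of $d_\Delta$ is the one for which $d_\Delta(0,w)=\tfrac12\log\frac{1+|w|}{1-|w|}$.)

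One inclusion is immediate. The inclusion map $\iota\colon\mbb B^n\hookrightarrow\Delta^n$ is injective and holomorphic, and its image $\mbb B^n$ contains $(1/\sqrt n)\Delta^n=B_{\Delta^n}(0,r_0)$, since $|z_i|<1/\sqrt n$ for all $i$ forces $\sum_i|z_i|^2<1$. Hence $(0,r_0]\subseteq\cal R$ and $h_{\Delta^n}(0)\le 1/r_0$.

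The substance is the reverse inclusion $\cal R\subseteq(0,r_0]$. Given an injective holomorphic $f\colon\mbb B^n\to\Delta^n$ with $B_{\Delta^n}(0,r)=\rho\Delta^n\subseteq f(\mbb B^n)$, $\rho:=\tanh r$, I would first precompose with an automorphism of $\mbb B^n$ to arrange $f(0)=0$; injectivity makes $A:=df_0$ invertible, and $g:=f^{-1}$ is injective holomorphic on $\rho\Delta^n$ with $g(0)=0$ and $dg_0=A^{-1}$. Two Hilbert--Schmidt bounds on $A$ then follow. First, each component $f_j\colon\mbb B^n\to\Delta$ satisfies the Schwarz inequality $|f_j(z)|\le|z|$, so every row of $A$ has Euclidean norm at most $1$ and $\|A\|_{HS}\le\sqrt n$. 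Second, $G(\zeta):=g(\rho\zeta)$ maps $\Delta^n$ into $\mbb B^n$ with $G(0)=0$, so $|dG_0v|\le\|v\|_\infty$ for all $v$ by the infinitesimal Schwarz (Kobayashi--Royden) inequality; testing this on the vectors $(e^{i\theta_1},\dots,e^{i\theta_n})$ and averaging over the angles kills the cross terms and gives $\|dG_0\|_{HS}\le 1$, i.e. $\|A^{-1}\|_{HS}=\rho^{-1}\|dG_0\|_{HS}\le\rho^{-1}$. Finally, Cauchy--Schwarz applied to the singular values $\sigma_1,\dots,\sigma_n>0$ of $A$ gives $n=\sum_i\sigma_i\sigma_i^{-1}\le\|A\|_{HS}\|A^{-1}\|_{HS}\le\sqrt n\,\rho^{-1}$, hence $\rho\le 1/\sqrt n$, i.e. $r\le r_0$. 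Combining the two inclusions gives $\cal R=(0,r_0]$ and the lemma; the case $n=1$ is the trivial identity $\Delta^1=\mbb B^1$, consistent with the formula reading $h_\Delta\equiv 0$.

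I expect the one delicate point to be the estimate $\|A^{-1}\|_{HS}\le\rho^{-1}$. The naive component-wise Schwarz lemma for $g$ only gives $\|A^{-1}\|_{HS}\le\sqrt n/\rho$, hence merely $\rho\le 1$; one really must exploit that $|g(w)|^2=\sum_i|g_i(w)|^2<1$ on all of $\rho\Delta^n$ — equivalently, the phase-averaging step — to extract the sharp constant $1/\sqrt n$. This is precisely the place where the roundness of $\mbb B^n$ (as opposed to the polydisc) is used, so any proof that does not single out this step is likely to lose the exact value.
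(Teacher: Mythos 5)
Your proof is correct, and its skeleton coincides with the paper's: homogeneity reduces everything to the origin, the Kobayashi balls of $\Delta^n$ centred there are the polydiscs $(\tanh r)\Delta^n$, the inclusion $\mathbb{B}^n \hookrightarrow \Delta^n$ gives $h_{\Delta^n}(0) \le 1/r_0$ with $r_0 = \tanh^{-1}(1/\sqrt{n})$, and the matching lower bound rests on the extremal fact that an injective holomorphic image of $\mathbb{B}^n$ in $\Delta^n$ through the origin cannot contain $\rho\Delta^n$ for $\rho > 1/\sqrt{n}$. The one genuine difference is how that extremal fact is handled: the paper simply cites Alexander's theorem \cite{Alexander}, whereas you prove what is needed from scratch by a purely infinitesimal argument --- the componentwise Schwarz bound $\Vert df_0 \Vert_{HS} \le \sqrt{n}$, the bound $\Vert (df_0)^{-1} \Vert_{HS} \le \rho^{-1}$ obtained by phase-averaging the inequality $\Vert dG_0 v \Vert_2 \le \Vert v\Vert_\infty$ (which is indeed the step where the roundness of $\mathbb{B}^n$ enters), and Cauchy--Schwarz on the singular values to force $\rho \le 1/\sqrt{n}$. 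All three steps check out. Your version is self-contained and uses only derivative data at the origin, which is all the lemma requires; the paper's is shorter but imports the full strength of Alexander's result as a black box. You are also right to insert the precomposition with an automorphism of $\mathbb{B}^n$ to normalize $f(0)=0$: the definition of the admissible set $\mathcal{R}$ does not impose $f(0)=p$, and the paper passes over this reduction silently.
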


\begin{proof} Since $ \Delta^n $ is homogeneous and the function $ h_{\Delta^n} \left( \cdot \right) $ is a biholomorphic invariant, 
it is enough to compute the explicit formula for $ h_{\Delta^n } $ at the origin. If $ f: \mathbb{B}^n \rightarrow \Delta^n $ is a holomorphic 
imedding such that $ f(0) = 0 $ and
\[
 \Delta^n \left( 0, \frac{e^{2r} -1}{e^{2r} + 1}\right) = B_{\Delta^n} (0,r) \subset f(\mathbb{B}^n),
\]
then it follows from \cite{Alexander} that
\[
 \frac{e^{2r} -1}{e^{2r} + 1} \leq \frac{1}{\sqrt{n}},
\]
or equivalently that,
\[
 \frac{1}{r} \geq 2 \left(  \log \left( \frac{ \sqrt{n} + 1}{ \sqrt{n} -1} \right)\right)^{-1},
\]
which implies that 
\begin{equation} \label{k14}
h_{\Delta^n} \left(0 \right) \geq 2 \left(  \log \left( \frac{ \sqrt{n} + 1}{ \sqrt{n} -1} \right)\right)^{-1}.
\end{equation}
On the other hand, consider $ \psi_0 = i \circ \psi $, where $ \psi $ is an automorphism of $ \mathbb{B}^n $ that preserves the origin and
$ i : \mathbb{B}^n \rightarrow \Delta^n $ is the inclusion map. Then $ \psi_0$ is an imbedding of $ \mathbb{B}^n $ into $ \Delta^n $ satisfying $ \psi_0(0) = 0 $ and
\[
 \Delta^n \left( 0, \frac{1}{\sqrt{n}} \right) =  B_{\Delta^n} \left(0, \frac{1}{2} \log \left( \frac{ \sqrt{n} + 1}{ \sqrt{n} -1}  \right) \right) 
 \subset \psi_0(\mathbb{B}^n),
\]
and hence
\begin{equation} \label{k15}
h_{\Delta^n} \left(0 \right) \leq 2 \left(  \log \left( \frac{ \sqrt{n} + 1}{ \sqrt{n} -1} \right)\right)^{-1}.
\end{equation}
Combining the inequalities (\ref{k14}) and (\ref{k15}) yields the desired expression for $ h_{\Delta^n} $. 

\end{proof}

\begin{lem}
Let $\{p^j\}$ be a sequence in the punctured disc $\Delta \sm \{0\}$ that converges to the origin. Then
\begin{equation} \label{E9}
L_j \le h_{\Delta \sm \{0\}}(p^j) \le U_j
\end{equation}
where
\[
L_j^{-1} = \log \left( 2 \left( -\frac{\pi}{\log \vert p^j \vert} \right)^2 + 1 + \frac{2 \pi}{\log \vert p^j \vert} \sqrt{\left( -\frac{\pi}{\log \vert p^j \vert} \right)^2 + 1} \right) 
\]
and
\[
U_j^{-1} = \log \left(  \left(-\frac{\pi}{\log \vert p^j \vert} \right) + \sqrt{\left(-\frac{\pi}{\log \vert p^j \vert}\right)^2 + 1 } \right).
\]
\end{lem}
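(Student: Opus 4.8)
Here is how I would go about it.

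The plan is to reduce to a normalised point by symmetry and then compute on the universal cover. Since the rotations $z\mapsto e^{i\theta}z$ are automorphisms of $\Delta\sm\{0\}$ and the Fridman invariant is biholomorphically invariant, $h_{\Delta\sm\{0\}}(p^j)$ depends only on $t_j:=|p^j|$, so I would assume at once that $p^j=t_j\in(0,1)$ and set $h_j:=\log(1/t_j)>0$. I would then pass to the universal covering $\varphi\colon\mathbb H\to\Delta\sm\{0\}$, $\varphi(w)=e^{iw}$, whose deck group is generated by $w\mapsto w+2\pi$; the Kobayashi metric of $\Delta\sm\{0\}$ is the push-down under $\varphi$ of the Poincar\'e metric of $\mathbb H$, and $\varphi^{-1}(t_j)=\{\,2\pi k+ih_j:k\in\mathbb Z\,\}$.

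The crucial step is to understand the Kobayashi ball $B_j(r):=B_{\Delta\sm\{0\}}(t_j,r)$ through its preimage $\varphi^{-1}\bigl(B_j(r)\bigr)=\bigcup_{k\in\mathbb Z}\bigl(B_{\mathbb H}(ih_j,r)+2\pi k\bigr)$. Here $B_{\mathbb H}(ih_j,r)$ is a Euclidean disc centred on the imaginary axis — the Poincar\'e geodesic through $ih_j$ meets that axis at $ih_je^{\pm r}$, so the disc has Euclidean radius $\rho_j(r)=h_j\sinh r$, a strictly increasing elementary function of $r$ — and this yields a clean dichotomy. If $\rho_j(r)\le\pi$, half the covering period, the translates $B_{\mathbb H}(ih_j,r)+2\pi k$ are pairwise disjoint, so $\varphi$ maps the central one biholomorphically onto $B_j(r)$; hence $B_j(r)$ is biholomorphic to $\mathbb B^1$ and in particular lies in an injective holomorphic image of $\mathbb B^1$, so $r$ belongs to the set $\mathcal R$ in the definition of $h$. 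If $\rho_j(r)>\pi$, consecutive translates overlap, so $\varphi^{-1}\bigl(B_j(r)\bigr)$ contains a whole horizontal line and $B_j(r)$ then contains a loop winding around the origin; as no simply connected subdomain of $\Delta\sm\{0\}$ — hence no image $f(\mathbb B^1)$ of an injective holomorphic $f$ — can carry such a loop, $r\notin\mathcal R$.

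The two bounds now drop out. For the upper bound, let $r^{\ast}_j$ be given by $\rho_j(r^{\ast}_j)=\pi$, i.e.\ $h_j\sinh r^{\ast}_j=\pi$; then $r^{\ast}_j\in\mathcal R$, so $h_{\Delta\sm\{0\}}(t_j)\le 1/r^{\ast}_j$, and rewriting $r^{\ast}_j=\sinh^{-1}(\pi/h_j)=\log\!\bigl(\pi/h_j+\sqrt{(\pi/h_j)^2+1}\bigr)$ with $\pi/h_j=-\pi/\log t_j$ identifies $1/r^{\ast}_j$ with $U_j$. For the lower bound, take $r\in\mathcal R$ and an injective holomorphic $f\colon\mathbb B^1\to\Delta\sm\{0\}$ with $B_j(r)\subseteq f(\mathbb B^1)$; since $\mathbb B^1$ is simply connected, $f$ lifts through $\varphi$ to $\tilde f\colon\mathbb B^1\to\mathbb H$, and injectivity of $f=\varphi\circ\tilde f$ forces $\tilde f(\mathbb B^1)$ to be disjoint from each translate $\tilde f(\mathbb B^1)+2\pi k$, $k\ne0$. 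Because $\tilde f(\mathbb B^1)$ contains exactly one lift of $t_j$, which a deck translation normalises to $ih_j$, and because the connected disc $B_{\mathbb H}(ih_j,r)$ lies in $\varphi^{-1}\bigl(B_j(r)\bigr)\subseteq\bigcup_k\bigl(\tilde f(\mathbb B^1)+2\pi k\bigr)$, this disc must sit inside $\tilde f(\mathbb B^1)$ alone; being disjoint from its own $2\pi$-translate then forces $\rho_j(r)\le\pi$, so $r\le r^{\ast}_j$. Hence $\mathcal R=(0,r^{\ast}_j]$, which yields $h_{\Delta\sm\{0\}}(t_j)\ge L_j$ — indeed it computes $h_{\Delta\sm\{0\}}(t_j)=1/r^{\ast}_j$ exactly — with $L_j^{-1}$ obtained from $2\sinh^{-1}x=\log\!\bigl(2x^2+1+2x\sqrt{x^2+1}\bigr)$ at $x=\pi/h_j$.

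I expect the lower-bound step to be the main obstacle: making rigorous the claim that the whole Euclidean disc $B_{\mathbb H}(ih_j,r)$ is trapped inside a single translate of $\tilde f(\mathbb B^1)$ rather than straddling several of them, and handling the borderline radius $\rho_j(r)=\pi$ where consecutive translates are merely tangent. The supporting facts — that a holomorphic covering is a local isometry for Kobayashi metrics, and that a simply connected subdomain of $\Delta\sm\{0\}$ admits no loop around the puncture — are standard, and converting $\rho_j(r^{\ast}_j)=\pi$ (together with the analogue governing $L_j$) into the displayed closed forms is routine algebra.
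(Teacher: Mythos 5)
Your argument is correct, and it takes a genuinely different route from the paper's. For the upper bound the paper exhibits a concrete simply connected subdomain, the slit disc $\Delta \sm (-1,0]$, and computes $d_{\Delta\sm\{0\}}(p^j,(-1,0))$ on the cover to see that the Kobayashi ball of that radius sits inside it; you instead observe that the Kobayashi ball itself is an embedded disc as long as $h_j\sinh r\le\pi$, where $h_j=-\log|p^j|$. The two thresholds coincide, so both yield the same $U_j=1/\sinh^{-1}(\pi/h_j)$. For the lower bound the paper argues with the circle $C^j=\{|w|=|p^j|\}$: it computes $s(p^j)=\sup_{q\in C^j}d_{\Delta\sm\{0\}}(p^j,q)$ and notes that a Kobayashi ball of larger radius would contain $C^j$, which no simply connected image $f(\Delta)$ can; you instead lift an arbitrary injective $f$ to $\mathbb{H}$ and use the pairwise disjointness of the deck translates of $\tilde f(\Delta)$, together with connectedness of the Euclidean disc $B_{\mathbb H}(ih_j,r)$, to trap that disc in a single translate and force $h_j\sinh r\le\pi$. (The two delicate points you flag are in fact already handled by what you wrote: the disc lies in one translate because it is connected and the translates form a disjoint open cover of $\varphi^{-1}(f(\Delta))$, and the tangency case $\rho_j(r)=\pi$ is harmless for open balls.) Your route buys more: it identifies $\mathcal R=(0,r_j^\ast]$ exactly and hence gives $h_{\Delta\sm\{0\}}(p^j)=U_j$, which is sharper than the two-sided estimate of the lemma and subsumes the paper's lower bound $L_j=1/(2r_j^\ast)$; it also confirms that the cross term in the displayed formula for $L_j^{-1}$ should read $-\frac{2\pi}{\log|p^j|}\sqrt{(-\pi/\log|p^j|)^2+1}$ (a positive quantity), the sign in the statement being a slip. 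The paper's version, in exchange, avoids any explicit description of Kobayashi balls as Euclidean discs and only needs the distance formula \eqref{E14} on $\mathbb H$. Both proofs share the same topological core, namely that a simply connected subdomain of $\Delta\sm\{0\}$ carries no loop around the puncture, and the same covering-space computation of distances.
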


\begin{proof} Since $ h_{\Delta \sm \{0\}} (\cdot) $ is a biholomorphic invariant, after composing with an appropriate automorphism of $ \Delta \setminus \{0\} $,
we may assume that each 
$ p^j $ lies in the open interval $ (0,1) $. Consider 
the slit disc $ \Delta \setminus (-1, 0] $, which is a simply connected domain. Choose a conformal map $ f^j $ from the unit disc $ \Delta $  
onto the slit domain $ \Delta \setminus (-1,0] $ such that $ f^j (0) = p^j $. Then 
\begin{equation*}
 B_{ \Delta \setminus \{0\} } \left( p^j, r (p^j) \right) \subset \Delta \setminus (-1,0], 
\end{equation*}
where
\begin{equation} \label{E8}
 r(p^j) = \log \left( - \frac{\pi}{\log p^j} + \sqrt{ \left( - \frac{\pi}{\log p^j}\right)^2 + 1} \right).
\end{equation}
To establish this claim, it suffices to show that
\begin{equation} \label{E7}
d_{\Delta \setminus \{0\} }  \left( p^j, (-1,0) \right) : = \inf_{q \in (-1,0)} d_{\Delta \setminus \{0\} }  \left( p^j, q \right) = r(p^j).  
\end{equation}
To verify this, first recall that the upper half-plane $ \mathbb{H} $ is the universal covering space of the punctured disc $ \Delta \setminus \{0\} $, 
the projection being given by the map
\[
 \mathbb{H} \ni z \mapsto \exp( \iota z ) \in \Delta \setminus \{0\}.  
\]
Hence, for each $ q $ in $ (-1,0) $, 
\begin{equation*}
d_{\Delta \setminus \{0\} }  \left( p^j, q \right) = \inf_{\tilde{q}} d_{\mathbb{H}}  \left( - \iota \log p^j, \tilde{q} \right),
\end{equation*}
where the infimum is taken over all preimages $ \tilde{q} $ of $ q $ under the covering map. Furthermore, the preimage of the interval $ (-1,0) $ is the 
vertical line $ \Re z = \pi $ which is a geodesic in $ \mathbb{H} $. It follows that
\begin{equation*}
d_{\Delta \setminus \{0\} }  \left( p^j, (-1,0) \right)  =  d_{\mathbb{H}} \left( - \iota \log p^j, \{ z \in \mathbb{H} : \Re z = \pi \} \right). 
\end{equation*}
To calculate the right-hand side, observe that there is a unique geodesic 
(namely, the half-circle centred at $ \pi $ and radius $ | - \iota \log p^j - \pi | $) through $ - \iota \log p^j $ 
and orthogonal to the line $ \{ \Re z = \pi \} $. Moreover, $ d_{\mathbb{H}} \left( - \iota \log p^j, \{ z: \Re z = \pi \} \right) $ is the distance from 
$ - \iota \log p^j $ to $  \{ \Re z = \pi \} $ measured along this half-circle. I.e., 
\begin{equation*}
d_{\mathbb{H}} \left( - \iota \log p^j, \{ z \in \mathbb{H} : \Re z = \pi \} \right) = d_{\mathbb{H}} \left( - \iota \log p^j, \pi + \iota | \iota \log p^j + \pi | \right). 
\end{equation*}
The Kobayashi distance between two points $ z , w $ on the upper half-plane is given by 
\begin{equation} \label{E14}
 d_{\mathbb{H}} (z,w) = \log \left( \frac{ |z - \bar{w}| + |z-w| }{|z- \bar{w}| - |z-w| }\right).
\end{equation}
Using the above formulation of the Kobayashi distance on $ \mathbb{H} $, it can be seen that
\begin{equation*}
 d_{\mathbb{H}} \left( - \iota \log p^j, \pi + \iota | \iota \log p^j + \pi | \right) = \log \left( - \frac{\pi}{\log p^j} + \sqrt{ \left( - \frac{\pi}{\log p^j}\right)^2 + 1} \right),
\end{equation*}
and consequently that,
\begin{equation*}
 d_{\Delta \setminus \{0\} } \left(p^j, (-1,0) \right) = \log \left(- \frac{\pi}{\log p^j} + \sqrt{ \left( - \frac{\pi}{\log p^j}\right)^2 + 1} \right), 
\end{equation*}
thereby verifying the equation (\ref{E7}). To summarize, there is a biholomorphic imbedding $ f^j : \Delta \rightarrow \Delta \setminus \{0\} $ with $ f^j(0) = p^j $ and
\begin{equation*}
 B_{ \Delta \setminus \{0\} } \left( p^j, r (p^j) \right) \subset f^j(\Delta) = \Delta \setminus (-1,0], 
\end{equation*}
where $ r(p^j) $ is as defined by equation (\ref{E8}). It follows that 
\[
 h_{\Delta \setminus \{0\} } (p^j) \leq 1/r(p^j),
\]
which gives the upper estimate (\ref{E9}). 

\medskip

\noindent For the lower estimate, the following observations will be needed. Firstly, the punctured disc is complete hyperbolic and hence taut. Moreover, for each $j $, 
$ h_{\Delta \setminus \{0\} } (p^j) > 0 $, and hence there exist a biholomorphic imbedding $ f^j : \Delta \rightarrow \Delta \setminus \{0\} $ with $ f^j(0) = p^j $ 
and
\begin{equation} \label{E10}
  B_{ \Delta \setminus \{0\} } \left( p^j, \frac{1}{h_{\Delta \setminus \{0\}  } (p^j) } \right) \subset f^j(\Delta) \subset \Delta \setminus \{0\}. 
\end{equation}
Secondly, consider the circle centred at the origin and radius $ p^j$,
\[
 C^j = \{ w \in \mathbb{C} : |w| = p^j\},
\]
 and compute 
\begin{equation*}
 \sup_{ q \in C^j} d_{\Delta \setminus \{0\} } \left(p^j, q \right).
\end{equation*}
It turns out that 
\begin{equation} \label{E12}
s(p^j) := \sup_{ q \in C^j} d_{\Delta \setminus \{0\} } \left(p^j, q \right) =  
\log \left( 2 \left( - \frac{\pi}{\log p^j } \right)^2 + 1+ \frac{2 \pi}{\log p^j } \sqrt{ \left( - \frac{\pi}{\log p^j}\right)^2 + 1} \right)  
\end{equation}
Grant this for now. It follows that the circle $ C^j $ is contained in the closure of the Kobayashi ball $ B_{ \Delta \setminus \{0\} } \left( p^j, s(p^j) \right) $.  
This forces that
\begin{equation} \label{E13}
 \frac{1}{h_{\Delta \setminus \{0\}  } (p^j)} \leq s(p^j). 
\end{equation}
Indeed, assume on the contrary that the above inequality does not hold, i.e., there is an $ \epsilon_0 > 0 $ such that
\begin{equation*}
 s(p^j) + \epsilon_0 < \frac{1}{h_{\Delta \setminus \{0\}  } (p^j) }.
\end{equation*}
Then it is immediate that 
\begin{equation} \label{E11}
 C^j \subset  B_{ \Delta \setminus \{0\} } \left( p^j, s(p^j) + \epsilon_0 \right) \subset 
 B_{ \Delta \setminus \{0\} } \left( p^j, \frac{1}{h_{\Delta \setminus \{0\}  } (p^j) } \right).
\end{equation}
Combining (\ref{E10}) and (\ref{E11}) gives
\begin{equation*}
 C^j \subset f^j(\Delta) \subset \Delta \setminus \{0\}.
\end{equation*}
But $ f^j(\Delta) $ is a simply connected sub-domain of the punctured disc and hence it cannot contain any circle centered at the origin. Hence 
we arrive at a contradiction, thereby proving the inequality (\ref{E13}).

\medskip

\noindent The final step is to establish equation (\ref{E12}). It follows from the definition that 
\begin{equation*}
 s(p^j) := \sup_{ q \in C^j} d_{\Delta \setminus \{0\} } \left(p^j, q \right) = 
 \sup_{ q \in C^j} \inf_{\tilde{q}} d_{\mathbb{H} } \left(- \iota \log p^j, \tilde{q} \right), 
\end{equation*}
where the infimum is taken over all preimages $ \tilde{q} $ of $ q $ under the covering projection $ z \mapsto \exp (\iota z) $. Write $ q = p^j \exp ( \iota \theta) $
for $ \theta \in [0, 2\pi) $, so that the right hand side above equals
\begin{equation*}
 \sup_{\theta \in [0, 2\pi)} \inf_{ k \in \mathbb{Z}} d_{\mathbb{H}} \left( - \iota \log p^j, 2 \pi k + \theta - \iota \log p^j \right)
\end{equation*}
A direct computation using the explicit expression (\ref{E14}) for $ d_{\mathbb{H}} (\cdot, \cdot) $ shows that
\begin{equation*}
\inf_{ k \in \mathbb{Z}} d_{\mathbb{H}} \left( - \iota \log p^j, 2 \pi k + \theta - \iota \log p^j \right) = 
\log \left( \frac{ \theta^2 + 2 (\log p^j)^2 + \theta \sqrt{\theta^2 + 4  (\log p^j)^2} }{2 (\log p^j)^2}\right), 
\end{equation*}
so that 
\begin{alignat*}{3}
  \sup_{\theta \in [0, 2\pi)} \inf_{ k \in \mathbb{Z}} d_{\mathbb{H}} \left( - \iota \log p^j, 2 \pi k + \theta - \iota \log p^j \right) 
  = & \log \left( \frac{ 4 \pi^2 + 2 (\log p^j)^2 + 2 \pi \sqrt{4 \pi^2 + 4  (\log p^j)^2} }{2 (\log p^j)^2}\right) \\
  = & \log \left( 2 \left( - \frac{\pi}{\log p^j } \right)^2 + 1+ \frac{2 \pi}{\log p^j } \sqrt{ \left( - \frac{\pi}{\log p^j}\right)^2 + 1} \right),  
 \end{alignat*}
thereby verifying (\ref{E12}). 

\end{proof}

\no Note that both $L_j, U_j \ra +\infty$ as $p^j \ra 0$ which is expected. Thus $h_{\Delta \sm \{0\}}$ blows up near the origin. On the other hand, that 
$h_{\Delta \sm \{0\}} (p^j) \ra 0$ if $\vert p^j \vert \ra 1$ can be seen from the following:

\begin{lem}
Let $D \subset \mbb C$ be a bounded domain with $p^0 \in \pa D$. Assume that $\pa D$ is $C^2$-smooth near $p^0$. Then $h_D(z) \ra 0$ as $z \ra p^0$.
\end{lem}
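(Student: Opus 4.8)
The plan is to reduce the statement to an estimate on Kobayashi balls and then invoke the Riemann mapping theorem. The key observation is: if, for a given $N>0$, one can produce a \emph{simply connected} subdomain $\Omega$ of $D$ with $z\in\Omega$ and $B_D(z,N)\subset\Omega$, then $h_D(z)\le 1/N$. Indeed $\Omega\subsetneq\mbb C$ is simply connected, so the Riemann map gives an injective holomorphic $f:\mbb B^1\to\Omega\subset D$ with $B_D(z,N)\subset\Omega=f(\mbb B^1)$, whence $N$ lies in the set $\cal R$ attached to $z$ in the definition of $h_D$. So it suffices to show: for every $N>0$ there are a neighbourhood $U$ of $p^0$ and a fixed simply connected $\Omega\subset D$ with $B_D(z,N)\subset\Omega$ for all $z\in U\cap D$. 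I would take $\Omega=\Omega_\eta:=D\cap\{\,|w-p^0|<\eta\,\}$; since $\pa D$ is $C^2$ (in particular $C^1$) near $p^0$, for all sufficiently small $\eta>0$ the set $\Omega_\eta$ is a bounded Jordan domain (hence simply connected) with $p^0\in\pa\Omega_\eta$, and $z\in\Omega_\eta$ once $z$ is close to $p^0$. With this choice, $B_D(z,N)\subset\Omega_\eta$ is equivalent to
\[
d_D\big(z,\,D\sm\Omega_\eta\big)=d_D\big(z,\{\,w\in D:|w-p^0|\ge\eta\,\}\big)\ge N,
\]
so everything comes down to proving $d_D\big(z,\{w\in D:|w-p^0|\ge\eta\}\big)\to+\infty$ as $z\to p^0$.

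To prove this I would compare $d_D$ from below with the Kobayashi distance of a simple model coming from the exterior ball condition. Since $\pa D$ is $C^2$ near $p^0$, there exist $\rho>0$ and a point $q\notin\ov D$ with $B':=\{\,|w-q|<\rho\,\}$ satisfying $\ov{B'}\cap\ov D=\{p^0\}$ (take $\rho$ below the reciprocal of the supremum of the curvature of $\pa D$ near $p^0$). Then $D\subset\mbb C\sm\ov{B'}$, and the Möbius map $S(w)=\rho/(w-q)$ carries $\mbb C\sm\ov{B'}$ biholomorphically onto the punctured disc $\mbb B^1\sm\{0\}$, with $S(p^0)=:\z_0\in\pa\mbb B^1$. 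Hence $\mbb C\sm\ov{B'}$ is hyperbolic, and by the distance–decreasing property of the Kobayashi distance together with $\mbb B^1\sm\{0\}\subset\mbb B^1$, for $w\in D$,
\[
d_D(z,w)\ \ge\ d_{\mbb C\sm\ov{B'}}(z,w)\ =\ d_{\mbb B^1\sm\{0\}}\big(S(z),S(w)\big)\ \ge\ d_{\mbb B^1}\big(S(z),S(w)\big).
\]
Now $|S(z)|=\rho/|z-q|\to 1$ as $z\to p^0$ (since $|p^0-q|=\rho$), while for $w\in D$ with $|w-p^0|\ge\eta$ one computes $|S(w)-\z_0|=|w-p^0|/|w-q|\ge\eta/R_0$ with $R_0:=\sup_{w\in\ov D}|w-q|$. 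Feeding this into the explicit Poincaré distance formula via the identity
\[
1-\left(\frac{|a-b|}{\,|1-\bar a b|\,}\right)^{2}=\frac{(1-|a|^{2})(1-|b|^{2})}{|1-\bar a b|^{2}},
\]
one gets $1-\tanh^2 d_{\mbb B^1}(S(z),S(w))\le 4R_0^2\eta^{-2}\,(1-|S(z)|^2)$ once $z$ is close enough to $p^0$ (the only thing needed is that $|1-\ov{S(z)}S(w)|\ge\tfrac12\eta/R_0$ uniformly in such $w$, which follows since $|1-\ov{\z_0}S(w)|=|S(w)-\z_0|\ge\eta/R_0$ and $S(z)\to\z_0$). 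Hence $d_{\mbb B^1}(S(z),S(w))\to+\infty$ \emph{uniformly} over all $w\in D\sm\Omega_\eta$, which is exactly the displayed estimate.

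Putting the two steps together: fix small $\eta>0$ and $N>0$; by the estimate there is a neighbourhood $U$ of $p^0$ with $d_D(z,D\sm\Omega_\eta)>N$ for $z\in U\cap D$, hence $B_D(z,N)\subset\Omega_\eta=f(\mbb B^1)$ for the Riemann map $f$ of $\Omega_\eta$, and therefore $h_D(z)\le 1/N$ there. Letting $N\to\infty$ gives $h_D(z)\to 0$ as $z\to p^0$. The two slightly delicate points are (a) that $\Omega_\eta$ is a Jordan domain for small $\eta$, a routine consequence of $C^1$ boundary regularity, and (b) the \emph{uniformity} of the blow-up of $d_{\mbb B^1}(S(z),S(w))$, which is where the exterior ball condition (hence the $C^2$ hypothesis) is genuinely used; I expect (b) to be the main point to get right, although it is short. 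Note also that the argument only uses how $z$ approaches $p^0$, with no restriction on the approach direction, and that simple connectedness plus the Riemann map here plays the role of the "convergence of integrated Kobayashi distances" machinery invoked elsewhere in the paper.
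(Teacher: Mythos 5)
Your proof is correct, but it takes a genuinely different route from the paper's. The paper scales $D$ at $p^0$ by the dilations $T^j(z)=(z-p^j)/(-\rho(p^j))$, observes that the scaled domains converge to a half-plane $D_\infty$, and then invokes the stability of the Fridman invariant under scaling (an adaptation of Theorem 1.1 of \cite{MV}, which uses a local holomorphic peak function at $p^0$) to get $h_D(p^j)\to h_{D_\infty}(0)=0$. You instead produce the required embeddings of the disc directly: the Riemann map of the simply connected piece $\Omega_\eta=D\cap\{|w-p^0|<\eta\}$ is an injective holomorphic map of $\mathbb{B}^1$ into $D$, and the exterior tangent disc at $p^0$ together with the explicit Poincar\'e distance yields the uniform blow-up $d_D(z,D\setminus\Omega_\eta)\to\infty$, so that $B_D(z,N)$ eventually lies in $\Omega_\eta$ and $h_D(z)\le 1/N$. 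Both halves check out: the reduction to $d_D(z,D\setminus\Omega_\eta)\ge N$ is exactly what the definition of $h_D$ requires, and the lower bound $|1-\overline{S(z)}S(w)|\ge\eta/(2R_0)$, obtained from $|S(w)-\zeta_0|\ge\eta/R_0$ and $S(z)\to\zeta_0$, is indeed the crux of the uniformity. What your argument buys: it is elementary and self-contained (no scaling, no normal families, no localization of $h_D$), and it visibly uses less than $C^2$ regularity --- simple connectivity of $\Omega_\eta$ needs only a $C^1$ (indeed Jordan) boundary near $p^0$, and the exterior ball condition needs only $C^{1,1}$ there. What it costs: it is irreducibly one-dimensional, resting as it does on the Riemann mapping theorem, whereas the paper's scaling argument is the template that the remaining results of the paper run in $\mathbb{C}^n$.
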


\begin{proof} Let $ \rho $ be a $ C^2$-smooth local defining function for $ \partial D $ near $ p^0 $ and $ \{ p^j \} $ be a sequence of points in $ D $ 
converging to $ p^0 $. Consider the dilations
\[
 T^j(z) = \frac{z-p^j}{- \rho(p^j)}
\]
and note that the scaled domains $ D^j = T^j (D)$ are given by
\[
 \{ z \in \mathbb{C}: - 1 + 2 \Re \left( \partial \rho(p^j) z \right) - \psi(p^j) O(1) < 0 \}.
\]
near $ T^j(p^0) $. It follows that the sequence of domains $ D^j $ converge in the Hausdorff sense to the half-plane
\[
 D_{\infty} = \{ z \in \mathbb{C}: 2 \Re \left( \partial \rho(p^0) z \right) - 1 < 0 \}.
\]
Note that $D$ supports a local holomorphic peak function at $p_0$ since the boundary $\pa D$ is $C^2$-smooth near it and hence the proof of Theorem 1.1 of \cite{MV} can be adapted to show that 
\[
 h_{D} (p^j) \rightarrow  h_{D_{\infty}} (0).
\]
But $ h_{D_{\infty}} (\cdot) \equiv 0 $ as $ D_{\infty} $ is biholomorphically equivalent to $ \mathbb{B}^n $. Hence  $  h_{D} (p^j) \rightarrow 0 $ as 
$ j \rightarrow \infty $.

\end{proof}

%%%%%%%%%%%%%%%%%%%%%%%%%
\section{Proof of Theorem 1.1}
\noindent Let $p^0 \in \pa D$. We will study the behaviour of $h_D(z)$ as $z \ra p^0$. The proof of Theorem 1.1 divides into two parts:

%\begin{prop} \label{A1}
%Let $D \subset \mbb C^n$ be a bounded domain with $p^0 \in \pa D$. Assume that $
%\partial D$ is $C^{\infty}$-smooth convex near $p^0$. Then 
%$\partial D$ is strongly pseudoconvex at $p^0$ if $h_D(z) \ra 0$ as $z \ra p^0$.
%\end{prop}

%\begin{prop} \label{A2}
%Let $D \subset \mbb C^n$ be a bounded domain with $p^0 \in \pa D$. Assume that $%\partial D$ is $C^{\infty}$-smooth convex near $p^0$. Then 
%$\partial D$ is strongly pseudoconvex at $p^0$ if $s_D(z) \ra 1$ as $z \ra p^0$.
%\end{prop}

%\noindent Moreover, there are two cases to be considered.
\begin{enumerate}
  \item [(i)] $ \partial D $ is of finite type near $ p^0 $, or
  \item [(ii)] $ \partial D $ is of infinite type near $ p^0 $.
\end{enumerate}
 
\noindent It turns out that $ p^0 \in \partial D $ cannot be of infinite type, thereby, ruling out case(ii). 
\medskip 

%\noindent \textit{Proof of Proposition \ref{A1}.} 

\noindent {\it Case (i):} let $ p^j $ be a sequence of points in $ D $ converging to $ p^0 $ along the inner normal to $ \partial D $ at $ p^0 $. Since 
$ \lim_{j \rightarrow \infty} h_{D} (p^j) = 0 $ by assumption, there exists a sequence of positive real numbers $ R_j \rightarrow \infty $ and
a sequence of biholomorphic imbeddings $ F^j: \mathbb{B}^n \rightarrow D $ satisfying $ F^j (0)= p^j $ and $ B_D(p^j, R_j) \subset F^j( \mathbb{B}^n) $.

\medskip

\noindent Before going further, let us briefly recall the scaling technique from \cite{Mcneal-1994}. Here and in the sequel, we
write $ z = (z_1, z_2, \ldots, z_{n-1}, z_n ) = ('z,z_n) \in \mathbb{C}^n $ for brevity. By \cite{Yu-1992} there exists a local coordinate
system $ \Phi $ in a neighbourhood of $ p^0 $ such that $ \Phi(p^0) = ('0, 0) $, $ 
\Phi (p^j) = ('0, - \| p^0 - p^j \|) $ for each $ j$ and the domain $ \Phi(D) $ near origin can be written as
\[
\{ ('z,z_n) \in \mathbb{C}^n : 2 \Re z_n + P_0('z) + R(z) < 0 \},
\]
where $P_0$ is a nondegenerate weighted homogeneous polynomial of
degree $1$ with respect to the weights $ \mathcal{M}(\partial D,
0) $, the multitype of $ \partial D $ near the origin, and $ R $ denotes terms of degree at least two.  Define a 
dilation of coordinates by
\[
T^j( z_1, z_2, \ldots, z_{n-1}, z_n ) = \left( \delta_j^{-1} z_1, \delta_j^{-1} z_2, 
\ldots, \delta_j^{-1} z_{n-1}, \delta_j^{-1} z_n \right),
\]
where $ \delta_j = \|p^0 - p^j \| $ for each $j$.
Note that $ T^j \big( ('0, - \delta_j) \big) = ('0, -1) $ for all $j$ and the scaled domains $ D^j = T^j \circ \Phi(D) $ 
converge in the Hausdorff sense to
\[
D_{ \infty} = \big\{ z \in \mathbb C^n : 2 \Re z_n  + P_{0}('z) < 0 \big\}.
\]
Furthermore, it follows from Theorem 1.1 of  \cite{Mcneal-1992} that $ D_{\infty} $ is complete hyperbolic and hence $ D_{\infty} $ is taut.

\medskip 
\noindent Consider the dilated maps
\[
\psi^j := T^j \circ \phi \circ F^j : \mathbb{B}^n \rightarrow D^j.
\]
Note that $ T^j \circ \phi \circ F^j \big( ('0,0) \big) = ('0,-1)$ for each $ j $. Since the domains $ D^j $ are contained in the intersections 
of certain half spaces (see \cite{Gaussier-1997} for details), it follows that the sequence $ \{ T^j \circ \phi \circ F^j  \} $ admits
a subsequence, that will still be denoted by the same indices, that
converges uniformly on compact sets of $ \mathbb{B}^n $ to a
holomorphic mapping $ \psi : \mathbb{B}^n \rightarrow D_{\infty} $.

\medskip

\noindent Then $ \psi $ is a biholomorphism from $ \mathbb{B}^n $ onto $ D_{\infty} $. To establish this, first note that for each $ \epsilon > 0 $,
\begin{equation}
B_{D_{\infty}} \left( ('0,-1), R - \epsilon \right) \subset B_{D^j} \left( ('0,-1), R \right)
\label{k9}
\end{equation}
for all $ R > 0 $ and all $j$ large and this will follow from
\[
\displaystyle\limsup_{j \rightarrow \infty} d_{D^j} \left( ('0,-1), \cdot \right)
\leq d_{D_{\infty}} \left( ('0,-1), \cdot \right).
\]
To verify the above inequality, fix $ q \in D_{\infty}$ and let $ \gamma : [0,1]
\rightarrow D_{\infty} $ be a piecewise $C^1$-smooth path in $
D_{\infty} $ such that $ \gamma(0) = ('0,-1), \gamma(1) = q$ and
\[
\int_0^1 { F_{D_{\infty}} \big( \gamma(t), \dot{\gamma}(t) \big)
dt} \leq d_{D_{\infty}} \left( ('0,-1), q \right) + \epsilon/2.
\]
Since the trace of $ \gamma $ is relatively compact in $
D_{\infty}$, it follows that the trace
of $\gamma $ is contained uniformly relatively compactly in $ D^j
$ for all large $j$. It follows from Lemma 6.2 of \cite{MV} that
\[
\int_0^1 {F_{D^j} \big( \gamma(t), \dot{\gamma}(t) \big) dt}
\leq \int_0^1 { F_{D_{\infty}} \big( \gamma(t), \dot{\gamma}(t)
\big) dt} + \epsilon/2 \leq d_{D_{\infty}} \left( ('0,-1), q \right) + \epsilon.
\]
Consequently,
\[
d_{D^j} \left( ('0,-1), q \right) \leq \int_0^1 {F_{D^j} \big( \gamma(t),
\dot{\gamma}(t) \big) dt} \leq d_{D_{\infty}} \left( ('0,-1), q \right) +
\epsilon
\]
which implies that
\[
\displaystyle\limsup_{j \rightarrow \infty} d_{D^j} \left( ('0,-1), \cdot \right)
\leq d_{D_{\infty}} \left( ('0,-1), \cdot \right).
\]

\noindent Note that $ B_{ D } ( p^j, R_j) \subset F^j(
\mathbb{B}^n)$. Since $ T^j \circ \phi $ are biholomorphisms and hence Kobayashi isometries, it
follows that
\begin{eqnarray}
B_{ D^j} \left( ('0,-1), R_j  \right) \subset T^j \circ \phi \circ F^j (\mathbb{B}^n). \label{4.0}
\end{eqnarray}
Since $ ( D_{\infty}, d_{D_{\infty}}) $ is complete, it is
possible to write
\begin{eqnarray}
D_{\infty} = \displaystyle \bigcup_{ \nu = 1} ^{\infty}
B_{D_{\infty}} \big( ('0,-1), \nu \big) \label{k10}
\end{eqnarray}
which is an exhaustion of $ D_{\infty}$ by an increasing union of
relatively compact domains. Now, consider
\[
\theta^j := \big( T^j \circ \phi \circ F^j \big)^{-1}: T^j \circ \phi \circ F^j (\mathbb{B}^n) \rightarrow
\mathbb{B}^n
\]
These mappings are evidently defined on an arbitrary compact
subset of $ D_{\infty} $ for large $j$ (cf. (\ref{k9}), (\ref{4.0}) and (\ref{k10})) and hence some subsequence
of $ \{ \theta^j \}$ converges to $ \theta: D_{\infty} \rightarrow
\overline{\mathbb{B}}^n$. Moreover, $ \theta \big( ('0,-1) \big) = ('0,0) $
together with the maximum principle shows that $ \theta :
D_{\infty} \rightarrow \mathbb{B}^n $. Finally observe that for
$w$ in a fixed compact set in $ D_{\infty} $,
\begin{eqnarray*}
| \psi \circ \theta (w) - w | & = & | \psi \circ \theta (w) -
\psi^j \circ \theta^j(w) | \\
& = & | \psi \circ \theta (w) - \psi \circ \theta^j (w) | + |
\psi \circ \theta^j(w) - \psi^j \circ \theta^j (w)| \\
& \rightarrow & 0 \ \mbox{as } j \rightarrow \infty
\end{eqnarray*}
This shows that $ \psi \circ \theta = id $. Similarly, it can be
proved that $ \theta \circ \psi = id$. This shows that $ 
D_{\infty} $ is biholomorphically equivalent to $ \mathbb{B}^n$.

\medskip

\noindent By composing with a suitable Cayley transform, if necessary, we may assume that there is a 
biholomorphism $ \tilde{\theta} $ from $ D_{\infty}$ onto the unbounded realization of the ball, namely to
\[
 \Sigma = \big\{ z \in \mathbb{C}^n : 2 \Re z_n + \abs{z_1}^2 + \abs{z_2}^2 + \ldots + 
\abs{z_{n-1}}^2 < 0 \big\}
\]
with the property that the cluster set of $ \tilde{\theta} $ at some point
$ ('0, \iota a) \in \partial D_{\infty} $ (for $a \in \mathbb{R} $) contains a  
point of $ \partial \Sigma $ different from the point at infinity on $ \partial \Sigma $. Then Theorem 2.1 of \cite{CP} ensures that $ \tilde{\theta} $ extends
holomorphically past the boundary of $ D_{\infty} $ to a neighbourhood  of $ ('0, \iota a) $. Furthermore, $ \tilde{\theta} $ 
extends biholomorphically across some point $ ('0, \iota a^0 ) \in \partial D_{ \infty} $.
To prove this claim, it suffices to show that the Jacobian of $ \tilde{\theta} $ does not vanish
identically on the complex plane 
\[
L = \big \{ ('0, \iota a) : a \in \mathbb{R} \big \} \subset \partial D_{\infty}.
\]
If the claim were false, then the Jacobian of $ \tilde{\theta} $ vanishes on the entire $ z_n $-axis, 
which intersects the domain $ D_{\infty} $. However, $ \tilde{\theta} $ is injective on $ D_{\infty} $,
and consequently, has nowhere vanishing Jacobian determinant on $ D_{\infty} $. This contradiction 
proves the claim.

\medskip

\noindent Next, note that the translations in the imaginary $ z_n $-direction 
leave $ D_{\infty} $ invariant. Therefore, we may assume that $ ('0, \iota a^0) $ is the origin and 
that $ \tilde{\theta} $ preserves the origin. Now recall that the Levi form is preserved under local biholomorphisms 
around a boundary point, thereby yielding the strong pseudoconvexity of $ \partial D_{\infty} $ near the origin.
Equivalently, the strong pseudoconvexity of $ p^0 \in \partial D $ follows. Hence the result.

\medskip

\noindent {\it Case (ii):} If $ p^0 $ were $ C^{\infty} $-convex of infinite type, by \cite{Z2}, there exists a sequence $ p^j $ in $ D $ 
converging to $ p^0 \in \partial D $ and affine isomorphisms $ A^j $ of 
$ \mathbb{C}^n $ so that the domains $ A^j(D) = D^j $ converge in the local Hausdorff topology to a convex domain $ D_{\infty} $ and $ A^j(p^j) \rightarrow 
0 \in D_{\infty}$. Moreover, it follows from Proposition 6.1 of \cite{Z1} that the limit domain $ D_{\infty} $ contains no complex affine lines and hence, it is Kobayashi complete.

\medskip

\noindent Since $ \lim_{j \rightarrow \infty} h_{D} (p^j) = 0 $ by assumption, there exists a sequence of positive real numbers $ R_j \rightarrow \infty $ and
a sequence of biholomorphic imbeddings $ F^j: \mathbb{B}^n \rightarrow D $ satisfying $ F^j (0)= p^j $ and $ B_D(p^j, R_j) \subset F^j( \mathbb{B}^n) $. Consider 
the maps
\[
 \psi^j := A^j \circ F^j : \mathbb{B}^n \rightarrow D^j.
\]
Note that $ A^j \circ F^j (0)  \rightarrow 0 \in D_{\infty} $. By Proposition 4.2 of \cite{Z1}, some subsequence of the sequence $ \{ A^j \circ F^j \} $ 
converges uniformly on compact sets of $ \mathbb{B}^n $ to a holomorphic mapping $ \psi : \mathbb{B}^n \rightarrow \overline{ D}_{\infty}  $. Since 
$ \psi(0)= 0 $, it follows that $ \psi ( \mathbb{B}^n ) \subset D_{\infty} $. 

\medskip

\noindent Then $ \psi $ is a biholomorphism from $ \mathbb{B}^n $ onto $ D_{\infty} $. This will be done in several steps. The first of these records the
stability of the infinitesimal Kobayashi metric, i.e., 
\begin{equation} \label{E1}
 F_{D^j} ( \cdot, \cdot) \rightarrow F_{D_{\infty}} (\cdot, \cdot)
\end{equation}
uniformly on compact sets of $ D_{\infty} \times \mathbb{C}^n $. The key step in proving the above assertion is to understand limits of 
holomorphic mappings $ f^j : \Delta \rightarrow D^j $ that almost realize $ F_{D^j} (\cdot, \cdot) $. Using Proposition 4.2 of \cite{Z1}, it is possible 
to pass to a subsequence of $ \{ f^j \} $ that converges to a holomorphic mapping $ f : \Delta \rightarrow D_{\infty} $ uniformly on compact sets of $ \Delta $. 
It follows that the limit map $ f $ provides a candidate in the definition of $ F_{D_{\infty}}( \cdot, \cdot) $. 

\medskip

\noindent The second step is to establish that
\begin{equation} \label{E21}
\displaystyle\limsup_{j \rightarrow \infty} d_{D^j} \left( A^j(p^j), \cdot \right)
\leq d_{D_{\infty}} \left( 0, \cdot \right), 
\end{equation}
which would imply that 
\begin{equation} \label{k11}
B_{D_{\infty}} \left( 0, R - \epsilon \right) \subset B_{D^j} \left( A^j(p^j), R \right), 
\end{equation}
for all $ R > 0 $ and all $j$ large and for each $ \epsilon > 0 $. To verify (\ref{E21}), fix $ q \in D_{\infty}$ as before and let $ \gamma : [0,1]
\rightarrow D_{\infty} $ be a piecewise $C^1$-smooth path in $
D_{\infty} $ such that $ \gamma(0) = 0, \gamma(1) = q$ and
\[
\int_0^1 { F_{D_{\infty}} \big( \gamma(t), \dot{\gamma}(t) \big)
dt} \leq d_{D_{\infty}} \left( 0, q \right) + \epsilon/2.
\]
Define $ \gamma^j: [0,1] \rightarrow \mathbb{C}^n $ by
\[
\gamma^j(t) = \gamma(t) + A^j(p^j)(1-t).
\]
Since the trace of $ \gamma $ is relatively compact in $
D_{\infty}$ and $ A^j (p^j) \rightarrow 0 $, it follows that the trace
of $\gamma $ is contained uniformly relatively compactly in $ D^j
$ for all large $j$. Note that $ \gamma^j(0) = A^j(p^j) $ and $ \gamma^j(1) = q $. In addition, $ \gamma^j
\rightarrow \gamma $ and $ \dot{\gamma}^j \rightarrow \dot{\gamma}
$ uniformly on $ [0,1]$. Appealing to (\ref{E1}) yields
\[
\int_0^1 {F_{D^j} \big( \gamma^j(t), \dot{\gamma}^j(t) \big) dt}
\leq \int_0^1 { F_{D_{\infty}} \big( \gamma(t), \dot{\gamma}(t)
\big) dt} + \epsilon/2 \leq d_{D_{\infty}} ( 0, q) + \epsilon.
\]
Therefore,
\[
d_{D^j} \left( A^j(p^j), q \right) \leq \int_0^1 {F_{D^j} \big( \gamma(t),
\dot{\gamma}(t) \big) dt} \leq d_{D_{\infty}} \left( 0, q \right) +
\epsilon
\]
which implies that
\[
\displaystyle\limsup_{j \rightarrow \infty} d_{D^j} \left( A^j(p^j), \cdot \right)
\leq d_{D_{\infty}} \left( 0, \cdot \right),
\]
thereby, establishing (\ref{E21}). 

\medskip

\noindent Next, recall that $ B_{ D } ( p^j, R_j) \subset F^j(\mathbb{B}^n) $. Since $ A^j $ are biholomorphisms and hence Kobayashi isometries, it
follows that
\begin{eqnarray}
B_{ D^j} \left( A^j(p^j) , R_j  \right) \subset A^j \circ F^j (\mathbb{B}^n). \label{5.0}
\end{eqnarray}
Since $ ( D_{\infty}, d_{D_{\infty}}) $ is complete, it is
possible to write
\begin{eqnarray}
D_{\infty} = \displaystyle \bigcup_{ \nu = 1} ^{\infty}
B_{D_{\infty}} \big( 0, \nu \big). \label{k13}
\end{eqnarray}
Now, consider the mappings
\[
\theta^j := \big( A^j \circ F^j \big)^{-1}: A^j \circ F^j (\mathbb{B}^n) \rightarrow
\mathbb{B}^n.
\]
It follows from  (\ref{k13}), (\ref{k11}) and (\ref{5.0}) that the mappings $ \theta^j$ are defined on any arbitrary compact
subset of $ D_{\infty} $ for large $j$. In particular, $ \{\theta^j \} $ is normal. Let $ \theta: D_{\infty} \rightarrow
\overline{\mathbb{B}}^n$ be a holomorphic limit of some subsequence
of $ \{ \theta^j \}$. Since $ \theta \big( 0 \big) = 0 $, it follows that $ \theta :
D_{\infty} \rightarrow \mathbb{B}^n $. The final step is to note that $ \psi \circ \theta = id $ and $ \theta \circ \psi = id$ as before. A consequence 
of all of this is that $ 
D_{\infty} $ is biholomorphically equivalent to $ \mathbb{B}^n$.

\medskip

\noindent Now we seek an contradiction. If $ p^0 \in \partial D $ were $ C^{\infty} $-convex of infinite type, then the limit domain $ \partial D_{\infty} $ 
contains a non-trivial complex affine disc (cf. Proposition 6.1, \cite{Z1}) and hence, it follows from Theorem 3.1 of \cite{Z1} that
$ \left(D_{\infty}, d_{D_{\infty}} \right) $ is not Gromov hyperbolic. Since $ \left( \mathbb{B}^n, d_{\mathbb{B}^n} \right) $ is Gromov hyperbolic
and Gromov hyperbolicity is an isometric invariant, it follows that $ \left(D_{\infty}, d_{D_{\infty}} \right) $ and  $ \left( \mathbb{B}^n, d_{\mathbb{B}^n} \right) $
cannot be isometric. This is a contradiction since $ D_{\infty} $ is biholomorphic to $ \mathbb{B}^n $. Hence, the boundary point $ p^0 $ has 
to be of finite type and we are in Case (i). \qed

%%%%%%%%%%%%%%%%%%%
\section{Proof of Theorem 1.2}

\noindent Proving Theorem 1.2 involves verifying the following two results:

\begin{prop} \label{B1}
Let $D \subset \mbb C^n$ be a bounded domain with $p^0 \in \partial D$. Assume that $\pa D$ is
$C^{\infty}$-smooth and $h$-extendible near $p^0$. Then $\partial D$ is strongly pseudoconvex near $p^0$ if 
$h_D(z) \ra 0$ as $z \ra p^0$. 
\end{prop}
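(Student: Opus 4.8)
The plan is to run the same Pinchuk-type scaling argument as in Case (i) of the proof of Theorem 1.1, now using the anisotropic dilations adapted to an $h$-extendible boundary point rather than a convex finite type one. Since an $h$-extendible point is in particular of finite type, the dichotomy (finite type) versus (infinite type) that had to be resolved for Theorem 1.1 does not arise here, so only the analogue of Case (i) is needed. First I would choose a sequence $p^j \in D$ converging to $p^0$ along the inner normal to $\pa D$; from the hypothesis $h_D(p^j) \ra 0$ one obtains positive reals $R_j \ra \infty$ and biholomorphic imbeddings $F^j : \mbb B^n \ra D$ with $F^j(('0,0)) = p^j$ and $B_D(p^j, R_j) \subset F^j(\mbb B^n)$. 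Applying the coordinate system of \cite{Yu-1992} centred at $p^0$ and the associated dilations $T^j$ recalled in Section 3, write $\Phi(p^j) = ('0, -\delta_j)$ with $\delta_j = \norm{p^0 - p^j} \ra 0$, so that $T^j(('0,-\delta_j)) = ('0,-1)$ and the scaled domains $D^j = T^j \circ \Phi(D)$ converge in the local Hausdorff sense to the weighted homogeneous model
\[
D_\infty = \big\{ z \in \mbb C^n : 2\Re z_n + P_0('z) < 0 \big\},
\]
where $P_0$ is the multitype polynomial of $\pa D$ at $p^0$. Because $p^0$ is $h$-extendible, $D_\infty$ is complete hyperbolic, hence taut, and is a bounded intersection of halfspaces in the sense of \cite{Gaussier-1997}; this is exactly the structural input that the convexity hypothesis provided in Section 3.

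Next I would form the dilated maps $\psi^j := T^j \circ \Phi \circ F^j : \mbb B^n \ra D^j$, note that $\psi^j(('0,0)) = ('0,-1)$, and extract a subsequential limit $\psi : \mbb B^n \ra D_\infty$, the target being $D_\infty$ itself since $\psi(('0,0)) = ('0,-1) \in D_\infty$. The claim that $\psi$ is a biholomorphism of $\mbb B^n$ onto $D_\infty$ is then proved verbatim as in Section 3: the stability of the integrated Kobayashi distance on the scaled domains, namely $\limsup_{j} d_{D^j}(('0,-1), \cdot) \le d_{D_\infty}(('0,-1), \cdot)$, which follows from Lemma 6.2 (together with Lemmas 5.2 and 5.7) of \cite{MV}, yields
\[
B_{D_\infty}\big(('0,-1), R - \ep\big) \subset B_{D^j}\big(('0,-1), R\big)
\]
for every $R > 0$, every $\ep > 0$, and all large $j$; combining this with $B_{D^j}(('0,-1), R_j) \subset \psi^j(\mbb B^n)$ and the exhaustion of the complete space $D_\infty$ by the Kobayashi balls $B_{D_\infty}(('0,-1), \nu)$, the inverse maps $\theta^j := (\psi^j)^{-1}$ are defined on every compact subset of $D_\infty$ once $j$ is large, so $\{\theta^j\}$ is normal. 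Any holomorphic limit $\theta : D_\infty \ra \ov{\mbb B}^n$ satisfies $\theta(('0,-1)) = ('0,0)$, hence maps into $\mbb B^n$ by the maximum principle, and the usual identity-approximation estimate gives $\psi \circ \theta = \mathrm{id}$ and $\theta \circ \psi = \mathrm{id}$. Thus $D_\infty$ is biholomorphic to $\mbb B^n$.

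Finally I would deduce from $D_\infty \cong \mbb B^n$ that the multitype of $\pa D$ at $p^0$ equals $(1, 2, \ldots, 2)$, i.e.\ that $\pa D$ is strongly pseudoconvex near $p^0$, arguing as in the last part of the proof of Theorem 1.1. After a Cayley transform, $D_\infty$ is biholomorphic to the Siegel domain $\Sigma$ via a map $\ti\theta$ whose cluster set at a suitable point $('0, \iota a) \in \pa D_\infty$ meets $\pa \Sigma$ in a finite point, so Theorem 2.1 of \cite{CP} makes $\ti\theta$ extend holomorphically past $\pa D_\infty$ there; since $\ti\theta$ is injective with nowhere vanishing Jacobian on $D_\infty$ and the $z_n$-axis meets $D_\infty$, its Jacobian cannot vanish identically on the line $L = \{('0, \iota a) : a \in \mbb R\} \subset \pa D_\infty$, whence $\ti\theta$ extends biholomorphically across some $('0, \iota a^0) \in \pa D_\infty$. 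Using the invariance of $D_\infty$ under imaginary $z_n$-translations we normalize this point to the origin, and then the biholomorphic invariance of the Levi form transfers the strong pseudoconvexity of $\pa \Sigma$ to $\pa D_\infty$ at the origin; since $\pa D_\infty$ near $0$ is $\{2\Re z_n + P_0('z) = 0\}$, this forces $P_0$ to be equivalent to $\sum_{k=1}^{n-1}\abs{z_k}^2$, so the multitype at $p^0$ is trivial and $\pa D$ is strongly pseudoconvex near $p^0$. The step that needs the most care is checking that the whole scaling package of Section 3 — Hausdorff convergence of the $D^j$, tautness of $D_\infty$, normality of $\{\psi^j\}$ and of $\{\theta^j\}$, and the distance-stability estimate — continues to hold for $h$-extendible (rather than convex) points; this is precisely where one must lean on \cite{Yu-1992}, \cite{Mcneal-1994}, \cite{Gaussier-1997} and the convergence results of \cite{MV} for the multitype scaling.
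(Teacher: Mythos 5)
Your overall architecture is the same as the paper's: anisotropic dilations adapted to the Catlin multitype, a limit model $D_\infty$ shown to be biholomorphic to $\mathbb{B}^n$ via the ball-inclusion and inverse-map argument, and then the Coupet--Pinchuk extension plus Levi-form invariance to force the multitype to be $(1,2,\ldots,2)$. However, there are two genuine gaps, both sitting exactly at the step you flag as needing the most care, and the references you lean on do not close them. The first concerns normality of $\{\psi^j\}$ and the stability of the Kobayashi metric on the scaled domains. You justify these by saying that $D_\infty$ is ``a bounded intersection of halfspaces in the sense of \cite{Gaussier-1997}'' --- but that is the convex-case mechanism from Section 3 and is unavailable here, since an $h$-extendible model need not be convex; moreover, what is actually required is not a property of the limit $D_\infty$ but a single taut domain containing all the varying targets $D^j$ simultaneously (tautness of $D_\infty$ alone does not make a family of maps into the $D^j$ normal, nor does it give $F_{D^j} \to F_{D_\infty}$). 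The paper supplies this missing ingredient via Theorem 4.7 of \cite{Yu-1995}: there is an $h$-extendible model $\Omega_0 = \big\{ 2\Re z_n + Q < 0 \big\}$, with $Q$ weighted homogeneous of weight one, such that $\Phi(U\cap D)\subset \Omega_0$ for $U$ small; the homogeneity of $Q$ makes $\Omega_0$ invariant under the dilations $T^j$, whence $D^j\subset\Omega_0$ for all large $j$, and tautness of $\Omega_0$ then yields both normality and the metric stability.

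The second gap is localization. You take imbeddings $F^j:\mathbb{B}^n\rightarrow D$ with $B_D(p^j,R_j)\subset F^j(\mathbb{B}^n)$ and immediately compose with $T^j\circ\Phi$. But the multitype normal form and the containment in $\Omega_0$ are purely local near $p^0$, so the Hausdorff convergence $D^j\rightarrow D_\infty$ and the inclusion $D^j\subset\Omega_0$ only hold for $D^j=T^j\circ\Phi(U\cap D)$ with $U$ a small neighbourhood of $p^0$; the images $F^j(\mathbb{B}^n)$ have no reason to stay in $U\cap D$. The paper resolves this by observing that an $h$-extendible point is a local peak point (\cite{Yu-1994}) and invoking the localization of the Fridman invariant (Proposition 3.4 of \cite{MV}), which upgrades $h_D(p^j)\rightarrow 0$ to $h_{U\cap D}(p^j)\rightarrow 0$ and hence produces imbeddings $F^j:\mathbb{B}^n\rightarrow U\cap D$ with $B_{U\cap D}(p^j,R_j)\subset F^j(\mathbb{B}^n)$. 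With these two repairs --- the model domain $\Omega_0$ from \cite{Yu-1995} and the peak-point localization --- the rest of your argument goes through as written.
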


\begin{prop} \label{B2}
Let $D \subset \mbb C^n$ be a bounded domain with $p^0 \in \partial D$. Assume that $\pa D$ is
$C^{\infty}$-smooth and $h$-extendible near $p^0$. Then $\partial D$ is strongly pseudoconvex near $p^0$ if $s_D(z) \ra 1$ as $z \ra p^0$. 
\end{prop}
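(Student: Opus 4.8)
The plan is to run the scaling argument from the proof of Proposition~\ref{B1} almost verbatim, substituting the squeezing hypothesis for the Fridman one only at the stage where a biholomorphism of the model domain onto $\mbb B^n$ is produced. Since an $h$-extendible boundary point is of finite type, there is no infinite-type case to exclude, so the argument is in fact shorter than that of Theorem~1.1. Fix $p^j\in D$ converging to $p^0$ along the inner normal. As in Proposition~\ref{B1} there is a local coordinate change $\Phi$ near $p^0$ with $\Phi(p^0)=('0,0)$ and $\Phi(p^j)=('0,-\delta_j)$, together with anisotropic dilations $T^j$ adapted to the multitype $\cal M(\pa D,p^0)$ such that $T^j\circ\Phi(p^j)=('0,-1)=:b$ for all $j$, and the scaled domains $D^j=T^j\circ\Phi(D)$ converge in the local Hausdorff sense to a weighted homogeneous model
\[
D_\infty=\{z\in\mbb C^n : 2\,\Re z_n+P_0('z)<0\},
\]
where $P_0$ is a weighted homogeneous plurisubharmonic polynomial of weighted degree one with respect to $\cal M(\pa D,p^0)$, free of pluriharmonic terms; moreover $D_\infty$ is complete hyperbolic, hence taut, and $\pa D_\infty$ is smooth and of finite type at every finite point of the line $L=\{('0,\iota a):a\in\mbb R\}$.

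Now invoke the hypothesis. Since $s_D(p^j)\ra 1$, for each $j$ there is an injective holomorphic $f^j:D\ra\mbb B^n$ with $f^j(p^j)=0$ and $B^n(0,r_j)\subset f^j(D)$ for some $r_j\ra 1$. Set
\[
\hat f^j:=f^j\circ(T^j\circ\Phi)^{-1}:D^j\ra\mbb B^n,\qquad \hat g^j:=(T^j\circ\Phi)\circ(f^j)^{-1},
\]
so that $\hat f^j(b)=0$, $\hat g^j(0)=b$, and $\hat f^j$, $\hat g^j$ are mutually inverse wherever both are defined. Because $D^j\ra D_\infty$ and the $\hat f^j$ take values in the fixed bounded set $\mbb B^n$, the family $\{\hat f^j\}$ is eventually defined on any compact subset of $D_\infty$ and normal there; passing to a subsequence, $\hat f^j\ra\hat f:D_\infty\ra\ov{\mbb B}^n$, and $\hat f(b)=0$ forces $\hat f(D_\infty)\subset\mbb B^n$ by the maximum principle. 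For the other family, the biholomorphic invariance of $d_D$ together with $B^n(0,r_j)\subset f^j(D)$ shows that $d_D\big(p^j,(f^j)^{-1}(w)\big)$ is bounded uniformly in $j$ for $w$ in a prescribed compact subset of $\mbb B^n$ — this is where $r_j\ra 1$, rather than merely $r_j$ bounded below, is used — so that the $(f^j)^{-1}$-image of such a set shrinks to $p^0$ and $\hat g^j$ is well-defined on it for $j$ large; since the $D^j$ are uniformly confined as in Proposition~\ref{B1} and $D_\infty$ is taut, $\{\hat g^j\}$ is normal, and along a further subsequence $\hat g^j\ra\hat g:\mbb B^n\ra\ov D_\infty$ with $\hat g(0)=b$, whence $\hat g(\mbb B^n)\subset D_\infty$. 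The telescoping estimate used in Case~(i) of Theorem~1.1 then gives $\hat f\circ\hat g=\mathrm{id}_{\mbb B^n}$ and $\hat g\circ\hat f=\mathrm{id}_{D_\infty}$, so $D_\infty$ is biholomorphic to $\mbb B^n$.

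From here the argument coincides with Case~(i) of Theorem~1.1. Composing with a Cayley transform yields a biholomorphism $\ti\theta$ of $D_\infty$ onto the unbounded realization $\Sigma$ of $\mbb B^n$; a cluster-set argument together with Theorem~2.1 of \cite{CP} shows that $\ti\theta$ extends holomorphically past a neighbourhood of a point of $L$, and the Jacobian of $\ti\theta$ cannot vanish identically on $L$, since otherwise it would vanish along the entire $z_n$-axis, which meets $D_\infty$, contradicting injectivity of $\ti\theta$. Hence $\ti\theta$ is a local biholomorphism at some $('0,\iota a^0)\in\pa D_\infty$, and since the Levi form is preserved by local biholomorphisms, $\pa D_\infty$ is strongly pseudoconvex there. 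Consequently the complex Hessian of $P_0$ at the origin is positive definite, so each monomial $\abs{z_k}^2$ occurs in $P_0$, which by weighted homogeneity forces every entry of the multitype $\cal M(\pa D,p^0)$ to equal $2$. Thus $p^0$ is a strongly pseudoconvex boundary point of $D$, and strong pseudoconvexity being an open condition, $\pa D$ is strongly pseudoconvex near $p^0$.

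The step I expect to be the main obstacle is the construction of the limit map $\hat g$, i.e.\ ruling out degeneration of the scaled inverse maps $\{\hat g^j\}$: this is exactly where the hypothesis $r_j\ra 1$ enters, and it must be combined with the tautness of $D_\infty$ and the uniform confinement of the $D^j$. A secondary point of care, also tied to $r_j\ra 1$, is that the telescoping requires $\hat f^j(w)$ and $\hat g^j(w')$ to stay in compact subsets of $\mbb B^n$ and of $D_\infty$ respectively as $w,w'$ range over compacta, which is what permits the two limit maps to be composed with one another. Everything else is either a transcription of Proposition~\ref{B1} or of Case~(i) of Theorem~1.1.
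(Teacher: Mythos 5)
Your argument is correct and follows the paper's proof of Proposition \ref{B2} essentially verbatim: the paper likewise scales $U\cap D$ by the anisotropic dilations $T^j\circ\Phi$, forms $\psi^j=F^j\circ(T^j\circ\phi)^{-1}$ and $\theta^j=(T^j\circ\phi)\circ(F^j)^{-1}$, uses $R_j\to 1$ together with the localization at $p^0$ to define $\theta^j$ on arbitrary compacta of $\mathbb{B}^n$, and concludes via the telescoping identity and the Cayley-transform/extension argument of Theorem 1.1(i). The one step you should make explicit is that your inference ``bounded Kobayashi distance from $p^j$ forces $(f^j)^{-1}(K)$ to shrink to $p^0$'' is exactly the attraction property at a local peak point --- it relies on Yu's peak function at $h$-extendible points and Lemma 2.1.1 of \cite{Gaussier-1999}, which is how the paper justifies $(F^j)^{-1}(K)\subset U\cap D$.
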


\noindent \textit{Proof of Proposition \ref{B1}.} Let $ p^j $ be a sequence of points in $ D $ converging to $ p^0 $
along the inner normal to $ \partial D $ at $ p^0 $. The boundary point $ p^0 $ is a local peak point by \cite{Yu-1994} and 
hence the Fridman's invariant function can be localized near $ p^0 $ (refer Proposition 3.4 of \cite{MV}). It follows that $ \lim_{j \rightarrow \infty} h_{U \cap D} (p^j, \mathbb{B}^n ) = 0 $ for any neighbourhood $ U $ of $ z^0 $. As a consequence, there exists a sequence of positive real numbers $ R_j \rightarrow \infty $ and
a sequence of biholomorphic imbeddings $ F^j: \mathbb{B}^n \rightarrow U \cap D $ satisfying $ F^j (0)= p^j $ and $ B_{U \cap D}(p^j, R_j) \subset F^j( \mathbb{B}^n) $.

\medskip

\noindent Let us quickly recall the local geometry of $h$-extendible domains. Firstly, if $ \mathcal{M}(\partial D, p^0) = (1, m_2, \ldots, m_n) $ denotes the Catlin's
multitype of $ \partial D $ near $ p^0 $, then there is an automorphism $ \Phi $ of $ \mathbb{C}^n $ such that 
such that $ \Phi(p^0) = ('0, 0) $, $ \Phi (p^j) = ('0, - \| p^0 - p^j \|) $ for each $ j$ and the defining function for the domain $ \Phi(D) $ can be
expanded near the origin as 
\begin{alignat*}{3}
2 \Re z_n + P('z, ' \overline{z}) + R(z),                                        
\end{alignat*}
where $ P('z, ' \overline{z}) $ is a $ (1/m_n , 1/m_{n-1} , \ldots , 1/m_2 ) $ homogeneous polynomial of weight one
which is plurisubharmonic and does not contain pluriharmonic terms and $ R $ satisfies
\[
|R(z)| \lesssim \left( |z_1 |^{m_n} + |z_2 |^{m_{n-1}} + \ldots + |z_n | \right)^{\gamma}
\]
for some $ \gamma > 1 $. Let $ T^j $ be the dilation defined by
\[
T^j( z_1, z_2, \ldots, z_{n-1}, z_n ) = \left( \delta_j^{-1/{m_n}} z_1, \delta_j^{-1/{m_{n-1}}} z_2, \ldots, \delta_j^{-1/m_2} z_{n-1}, \delta_j^{-1} z_n \right),
\] 
where $ \delta_j = \|p^0 - p^j \| $ for each $j$. Note that $ T^j \big( ('0, - \delta_j) \big) = ('0, -1) $ for all $j$ while the domains 
$ D^j := T^j \circ \Phi(U \cap D) $ 
converge in the Hausdorff sense to
\[
D_{ \infty} = \big\{ z \in \mathbb C^n : 2 \Re z_n  + P('z, ' \overline{z}) < 0 \big\}.
\]
On the other hand, 
by Theorem 4.7 of \cite{Yu-1995}, there is an $h$-extendible model 
\[
 \Omega_0 = \big\{ z \in \mathbb{C}^n : 2 \Re z_n + Q('z, ' \overline{z}) < 0 \big\},
\]
where $ Q('z, ' \overline{z}) $ is a $ (1/m_n , 1/m_{n-1} , \ldots , 1/m_2 ) $ homogeneous polynomial (of weight one
which is plurisubharmonic but not pluriharmonic), such that if $ U $ is a small neighbourhood of $ p^0 $, then
\[
 \Phi(U \cap D) \subset \Omega_0. 
\]
Consequently, the scaled domains $ D^j $ satisfy
\begin{eqnarray*}
 D^j = T^j \circ \Phi(U \cap D) \subset T^j(\Omega_0)
\end{eqnarray*}
for all $j $ large. The homogenity of $ Q('z, ' \overline{z}) $ with weight one as described above implies that 
\begin{equation*}
 Q \left(  \delta_j^{1/{m_n}} z_1, \delta_j^{1/{m_{n-1}}} z_2, 
\ldots, \delta_j^{1/m_2} z_{n-1} \right) = \delta_j Q ('z, ' \overline{z} )
\end{equation*}
for each $j $. In other words, the mappings $ T^j $ leave $ \Omega_0 $ invariant and hence
\begin{eqnarray} \label{E15}
 D^j = T^j \circ \Phi(U \cap D) \subset \Omega_0.
\end{eqnarray}
Furthermore, observe that $ D_{\infty} $ and $ \Omega_0 $ are complete hyperbolic (cf. \cite{Yu-1994}) and hence both $ D_{\infty} $ and $ \Omega_0 $
are taut.

\medskip 
\noindent Let us consider the holomorphic mappings
\[
\psi^j := T^j \circ \phi \circ F^j : \mathbb{B}^n \rightarrow D^j.
\]
Note that $ T^j \circ \phi \circ F^j \big( ('0,0) \big) = ('0,-1)$ for each $ j $. As the scaled domains $ D^j $ are contained in a taut domain $ \Omega_0 $, 
the sequence $ \{ T^j \circ \phi \circ F^j  \} $ admits
a subsequence, that will still be denoted by the same indices, that
converges uniformly on compact sets of $ \mathbb{B}^n $ to a
holomorphic mapping $ \psi : \mathbb{B}^n \rightarrow D_{\infty} $.

\medskip

\noindent Here, too, it turns out that $ \psi $ is a biholomorphism from $ \mathbb{B}^n $ onto $ D_{\infty} $. To establish this, first note that 
\begin{eqnarray} \label{E4}
 F_{D^j} ( \cdot, \cdot) \rightarrow F_{D_{\infty}} ( \cdot, \cdot) 
\end{eqnarray}
uniformly on compact sets of $ D_{\infty} \times \mathbb{C}^n $. The above assertion 
can be proved using the fact that the scaled domains $ D^j $ are all contained in $ \Omega_0 $ for large $ j $ (cf. (\ref{E15})).
Once the stability of the infinitesimal metric is understood (i.e., (\ref{E4})), by similar arguments as in the proof of Theorem 1.1(i), it follows that
for each $ \epsilon > 0 $,
\begin{equation}
B_{D_{\infty}} \left( ('0,-1), R - \epsilon \right) \subset B_{D^j} \left( ('0,-1), R \right)
\label{E2}
\end{equation}
for all $ R > 0 $ and all $j$ large. 
\medskip

\noindent Recall that $ B_{U \cap D } ( p^j, R_j) \subset F^j(
\mathbb{B}^n)$. Since $ T^j \circ \phi $ are biholomorphisms and hence Kobayashi isometries, it
follows that
\begin{eqnarray}
B_{ D^j} \left( ('0,-1), R_j  \right) \subset T^j \circ \phi \circ F^j (\mathbb{B}^n). \label{E5}
\end{eqnarray}
Exploiting the Kobayashi completeness of the limit domain $ D_{\infty}$, it is
possible to write
\begin{eqnarray}
D_{\infty} = \displaystyle \bigcup_{ \nu = 1} ^{\infty}
B_{D_{\infty}} \big( ('0,-1), \nu \big). \label{E6}
\end{eqnarray}
Now, consider the mappings
\[
\theta^j := \big( T^j \circ \phi \circ F^j \big)^{-1}: T^j \circ \phi \circ F^j (\mathbb{B}^n) \rightarrow
\mathbb{B}^n
\]
defined on an arbitrary compact
subset of $ D_{\infty} $ for large $j$ (cf. (\ref{E2}), (\ref{E5}) and (\ref{E6})) and hence some subsequence
of $ \{ \theta^j \}$ converges to $ \theta: D_{\infty} \rightarrow
\overline{\mathbb{B}}^n$. Moreover, $ \theta \big( ('0,-1) \big) = ('0,0) $
together with the maximum principle shows that $ \theta :
D_{\infty} \rightarrow \mathbb{B}^n $. Then it can be checked that $ \psi \circ \theta = id $ and $ \theta \circ \psi = id$ and hence $ 
D_{\infty} $ is biholomorphically equivalent to $ \mathbb{B}^n$.

\medskip
\noindent Finally, we are ready to prove the theorem. Using similar arguments as in the proof of Theorem 1.1(i), it is 
possible to show that there is a biholomorphism $ \tilde{\theta} $ from $ D_{\infty}$ onto the unbounded realization of the ball, namely to
\[
 \Sigma = \big\{ z \in \mathbb{C}^n : 2 \Re z_n + \abs{z_1}^2 + \abs{z_2}^2 + \ldots + 
\abs{z_{n-1}}^2 < 0 \big\},
\]
with the property that $ \tilde{\theta} $ extends
biholomorphically past the boundary of $ D_{\infty} $ to a neighbourhood  of the origin. Also, $ \tilde{\theta} \big( ('0,0) \big) = 
\big( ('0,0) \big) $. Since the Levi form is preserved under local biholomorphisms 
around a boundary point, it follows that $ \partial D_{\infty} $ must be strongly pseudoconvex near the origin.
In particular, $ m_2= \ldots = m_n =2 $ and $ P('z, ' \overline{z}) = |z_1|^2 + \ldots + |z_n|^2  $ which gives the strong pseudoconvexity 
of $ p^0 \in \partial D $. Hence the result. 
\qed

\medskip

\noindent \textit{Proof of Proposition \ref{B2}.} Let $ p^j $ be a sequence of points in $ D $ converging to $ p^0 $ along 
the inner normal to $ \partial D $ at $ p^0 $. Since $ \lim_{j \rightarrow \infty} s_{D} (p^j) = 1 $ by assumption, there 
exists a sequence of positive real numbers $ R_j \rightarrow 1 $ and
a sequence of biholomorphic imbeddings $ F^j: D \rightarrow \mathbb{B}^n $ satisfying $ F^j(p^j)=0 $ and $ {B}^n(0, R_j) \subset F^j( D) $.

\medskip

\noindent Let us adapt here the method of uniform scaling. Let $ U $ be a neighbourhood of $ p^0 \in \partial D $ and the mappings $ T^j \circ \Phi $ 
be as described in the the proof of Proposition \ref{B1}. So that the domains $ D^j = T^j \circ \Phi (U \cap D) $ converge in the Hausdorff sense to
\[
D_{ \infty} = \big\{ z \in \mathbb C^n : 2 \Re z_n + P('z, ' \overline{z}) < 0 \big\}.
\]
Here, $ P('z, ' \overline{z}) $ is a $ (1/m_n , 1/m_{n-1} , \ldots , 1/m_2 ) $ homogeneous polynomial of weight one that coincides with the polynomial
of same degree in the homogeneous Taylor expansion of the defining function for $ \Phi(D) $ 
near the origin and $ (1, m_2, \ldots, m_n) $ is the Catlin's multitype of $ \partial D $ near $ p^0 $.

\medskip 
\noindent Consider the maps
\[
\psi^j := F^j  \circ \left( T^j \circ \phi \right)^{-1} : D^j \rightarrow F^j(D) \subset \mathbb{B}^n.
\]
Note that $ F^j  \circ \left( T^j \circ \phi \right)^{-1} \big( ('0,-1) \big) = ('0,0)$ for each $ j $. These mappings are defined on an arbitrary compact
subset of $ D_{\infty} $ for large $j$ and hence some subsequence
of $ \{ \psi^j \}$ converges to $ \psi: D_{\infty} \rightarrow
\overline{\mathbb{B}}^n$. Since $ \psi \big( ('0,-1) \big) = ('0,0) $, it follows that $ \psi (D_{\infty}) \subset \mathbb{B}^n $.

\medskip
\noindent The claim is that $ \psi $ is a biholomorphism from $ D_{\infty} $ onto $ \mathbb{B}^n $. To begin with, let $ K $ be 
an arbitrary compact set of $ \mathbb{B}^n $ containing the origin. Since $ {B}^n (0, R_j) \subset F^j(D) $ and 
$ R_j \rightarrow 1 $, the mappings $ (F^j)^{-1} $ are defined on $ K $ for $ j $ large. Since $ (F^j)^{-1} \big(('0,0) \big) = p^j \rightarrow p^0 \in \partial D $ and 
the boundary point $ p^0 $ is a local peak point, it follows from the attraction property (see for instance Lemma 2.1.1 of \cite{Gaussier-1999}) that
\[
 (F^j)^{-1} (K) \subset U \cap D 
\]
for all $ j $ large. Therefore
\[
\left( T^j \circ \phi \right) \circ \left(F^j \right)^{-1} (K) \subset \left( T^j \circ \phi \right) (U \cap D) =  D^j
\]
and hence the scaling sequence
\[
 \theta^j := \left( T^j \circ \phi \right) \circ \left(F^j \right)^{-1}
\]
maps $ K $ injectively into $ D^j $ for all $ j $ large. On the other hand, 
by (\ref{E15}), there is a taut domain $ \Omega_0 $ such that $ D^j \subset \Omega_0 $ 
for all $j $ large. It follows that  $ \{ T^j \circ \phi \circ \left(F^j \right)^{-1}|_K  \} $ forms a normal family. Let $ \theta : K \rightarrow 
\overline{D}_\infty $ be a holomorphic limit of some subsequence of $ \{ T^j \circ \phi \circ \left(F^j \right)^{-1} \} $. Since $ K $ is an arbitrary
compact subset of $ \mathbb{B}^n $, $ \theta $ is defined on whole of $ \mathbb{B}^n $. As noted earlier, $ T^j \circ \phi \circ \left(F^j \right)^{-1} 
\big( ('0,0) \big) = ('0,-1)$ for each $ j $ and so $ \theta \big( ('0,0) \big) = ('0, -1) $. But $ ('0, -1) \in D_{\infty} $ and $ D_{\infty} $ is open, so 
$ \theta (\mathbb{B}^n ) \subset D_{\infty} $. 

\medskip
\noindent Now an argument similar to the one employed in Theorem 1.1 shows that  $ \psi \circ \theta = id $ and $ \theta \circ \psi = id$, which in
turn implies that $ D_{\infty} $ is biholomorphically equivalent to $ \mathbb{B}^n$. As in the proof of Proposition \ref{B1}, the strong pseudoconvexity 
of $ p^0 \in \partial D $ follows.
\qed

\medskip

\noindent {\it Concluding Remarks:} While the construction of $h_D$ and $s_D$ are completely dual to each other, the exact relation
between them is unclear from their definitions. It would be interesting and useful to clarify this. 

\medskip

The ratio of the Carath\'{e}odory--Eisenmann and the Kobayashi volume forms is another biholomorphic invariant
that has been studied in the past. It is known that it is at most $1$ everywhere and that if it equals $1$ at an
interior point, then the domain is biholomorphic to $\mbb B^n$. Is there an analog of this result for the ratio $s_D/h_D$?
 
\medskip

\noindent {\it Acknowledgements:} The authors would like to thank A. Zimmer for pointing out a gap in an earlier version of this article.

%%%%%%%%%

\end{document}